\documentclass[12pt]{article}

\setlength{\textwidth}{6.5in}
\setlength{\textheight}{9in}
\setlength{\topmargin}{-0.5in}
\setlength{\oddsidemargin}{0.5in}
\setlength{\evensidemargin}{0.5in}

\usepackage{diagbox}
\usepackage{mathtools}
\usepackage{bbm}
\usepackage{latexsym}
\usepackage{epsfig}
\usepackage{amsmath,amsthm,amssymb,enumerate}

\usepackage[a-1b]{pdfx}
\usepackage{hyperref}

\parindent 0in
\parskip 2.5ex
\usepackage{color}

\addtolength{\textwidth}{1.2in} \addtolength{\oddsidemargin}{-1in}
\addtolength{\evensidemargin}{-0.5in}
\addtolength{\topmargin}{-0.5in} \addtolength{\textheight}{1in}


\def\nn{\nonumber}
\def\a{\alpha}  \def\d{\delta} 
\def\e{\varepsilon}    
\def\G{\Gamma}  
     \def\l{\lambda}
   \def\p{\pi}
\def\r{\rho}   
 \def\om{\omega}

\newtheorem{theorem}{Theorem}

\newtheorem{claim}{Claim}


\newcommand{\rdup}[1]{{\left\lceil #1\right\rceil }}
\newcommand{\rdown}[1]{{\left\lfloor #1\right \rfloor}}
\def\bin{\text{Bin}}

\newcommand{\brac}[1]{\left(#1\right)}
\newcommand{\cbrac}[1]{\left\{#1\right\}}
\newcommand{\sqbrac}[1]{\left[#1\right]}

\newcommand{\bfrac}[2]{\left(\frac{#1}{#2}\right)}

\newcommand{\set}[1]{\left\{#1\right\}}

\def\E{\mathbb{E}}

\def\Pr{\mathbb{P}}

\newcommand{\ignore}[1]{}

\def\diam{\text{diam}}
\def\nn{\nonumber}

\usepackage{tikz}
\usetikzlibrary{decorations.pathmorphing}
\usetikzlibrary{positioning}
\usetikzlibrary{arrows,automata}
\usetikzlibrary{shapes.misc}
\usetikzlibrary{backgrounds}
\usetikzlibrary{arrows,shapes}

\begin{document}
\date{}
\title{The intersection of a random geometric graph with an Erd\H{o}s-R\'enyi graph}
\author{Patrick Bennett\thanks{Department of Mathematics, Western Michigan University, Kalamazoo MI 49008, Research supported in part by Simons Foundation Grant \#426894.}\and Alan Frieze\thanks{Department of Mathematical Sciences, Carnegie Mellon University, Pittsburgh PA 15213, Research supported in part by NSF grant DMS1952285.} \and Wesley Pegden\thanks{Department of Mathematical Sciences, Carnegie Mellon University, Pittsburgh PA 15213, Research supported in part by NSF grant DMS1700365. }}
\maketitle

\begin{abstract}
  We study the intersection of a random geometric graph with an Erd\H{o}s-R\'enyi graph. Specifically, we generate the random geometric graph $G(n, r)$ by choosing $n$ points uniformly at random from $D=[0, 1]^2$ and joining any two points whose Euclidean distance is at most $r$. We let $G(n, p)$ be the classical Erd\H{o}s-R\'enyi graph, i.e. it has $n$ vertices and every pair of vertices is adjacent with probability $p$ independently. In this note we study $G(n, r, p):=G(n, r) \cap G(n, p)$. One way to think of this graph is that we take $G(n, r)$ and then randomly delete edges with probability $1-p$ independently.  We consider the clique number, independence number, connectivity, Hamiltonicity, chromatic number, and diameter of this graph where both $p(n)\to 0$ and $r(n)\to 0$; the same model was studied by Kahle, Tian and Wang (2023) for $r(n)\to 0$ but $p$ fixed.
\end{abstract}

\section{Introduction}
In this note we study the intersection of a random geometric graph with an Erd\H{o}s-R\'enyi graph. Specifically, we generate the random geometric graph $G(n, r)$ by choosing $n$ points uniformly at random from $D=[0, 1]^2$ and joining any two points whose Euclidean distance is at most $r$. See Penrose \cite{Pen} for research monograph on this model. For convenience we work on the torus, so for example we consider $(0, 0), (1, 0), (0, 1)$ and $(1, 1)$ to the the same point. We let $G(n, p)$ be the classical Erd\H{o}s-R\'enyi graph, i.e. it has $n$ vertices and every pair of vertices is adjacent with probability $p$ independently. In this note we study the graph we will call $G(n, r, p):=G(n, r) \cap G(n, p)$. One way to think of this graph is that we take $G(n, r)$ and then randomly delete edges with probability $1-p$ independently. 

Kahle, Tian and Wang \cite{KTW23} studied the clique number of a random graph which is more general than our $G(n, r, p)$ (but for a more limited range of the parameter $p$). In \cite{KTW23} they study the {\em noisy random geometric graph}, which is the result of taking a random geometric graph, deleting each edge independently with some probability, and also adding random edges that were not present. Thus, our model is the {\em deletion-only case} of the model in \cite{KTW23}. However, in \cite{KTW23} they consider the case where only $r=r(n)$ can be a function of $n$, but $p$ is a fixed constant.  We allow $p=p(n)\to 0$ as well.  For convenience we define the parameter 
\[
q:=\p r^2p,
\]
which is the probability that that two given vertices are adjacent. 

We begin with a discussion of the clique number of $G=G(n,r,p)$.
\begin{theorem}\label{thm:clique}
For each fixed $\e>0$ there exists $C=C_\e$ such that we have the following. If $nr^2 \ge \log n$ and $C/(nr^2) \le 1-p \le \log^{-3}n$, then 
\begin{equation}\label{eqn:clique}
    \frac{(2-\e)\log\sqbrac{(1-p)nr^2}}{1-p} \le \omega(G) \le \frac{2\log\sqbrac{(1-p)nr^2}}{1-p}
\end{equation}
\end{theorem}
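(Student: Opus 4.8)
The plan is to prove the upper bound by a first‑moment (union) bound over potential cliques, and the lower bound by restricting attention to a small region of $D$ on which $G(n,r)$ is complete, thereby reducing the question to the clique number of a \emph{dense} Erd\H{o}s--R\'enyi graph. Throughout write $\d:=1-p$ and $m:=nr^2$, so the hypotheses read $\d m\ge C$, $\d\le\log^{-3}n$, $m\ge\log n$, and $r<1/2$ (since $r\to0$); the target is $(2-\e)\d^{-1}\log(\d m)\le\om(G)\le 2\d^{-1}\log(\d m)$. \textbf{Upper bound.} A $k$‑clique of $G$ is a set of $k$ points pairwise within distance $r$ that also spans a clique in the independent $G(n,p)$ layer. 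Since any such set lies in the disc of radius $r$ about any one of its points, the expected number of pairwise‑within‑$r$ $k$‑subsets of the point set is at most $\binom nk(\p r^2)^{k-1}$; multiplying by the independent factor $p^{\binom k2}\le e^{-\d\binom k2}$ and using $\binom nk\le(en/k)^k$ gives, for the number $X_k$ of $k$‑cliques of $G$,
\[
\E X_k\ \le\ \frac1{\p r^2}\brac{\frac{e\p m}{k}\,e^{-\d(k-1)/2}}^{k}.
\]
I would then set $k:=\rdup{2\d^{-1}\log(\d m)}$; using $\d(k-1)/2\ge\log(\d m)-\d/2$ and $\log k\ge\log\brac{2\d^{-1}\log(\d m)}$, a short calculation shows the bracketed base is at most $(1+o(1))\,e\p/\bigl(2\log(\d m)\bigr)$, which is bounded away from $1$ as soon as $C$ is a sufficiently large absolute constant. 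Since also $k\ge 2\d^{-1}\log C\ge 2\log C\cdot\log^{3}n$ whereas $\log\tfrac1{\p r^2}=\log\tfrac{n}{\p m}=O(\log n)$, we get $\E X_k=o(1)$; hence \whp\ $\om(G)<k$, and rounding gives $\om(G)\le 2\d^{-1}\log(\d m)$.

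\textbf{Lower bound.} Tessellate the torus $D$ into $\ell^2$ axis‑parallel squares of side $1/\ell$ with $\ell:=\rdup{\sqrt2/r}$; each square has diameter $\le r$, hence induces a clique in $G(n,r)$, and by pigeonhole some square $\cC$ contains $N\ge n/\ell^2\ge(1-o(1))m/2$ of the points. Since $G(n,p)$ is independent of the point locations (which alone determine $\cC$), conditionally the subgraph of $G$ induced on $\cC$ is distributed as $G(N,p)$, and so $\om(G)\ge\om(G(N,p))=\a\bigl(G(N,\d)\bigr)$, the independence number of a sparse Erd\H{o}s--R\'enyi graph with $\d\to0$ and $\d N\ge(1-o(1))\d m/2$ — which exceeds any prescribed constant once $C=C_\e$ is large. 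Known estimates for the independence number of sparse random graphs then give the following: for each fixed $\eta>0$, provided $\d N$ is large enough in terms of $\eta$, \whp
\[
\a\bigl(G(N,\d)\bigr)\ \ge\ (1-\eta)\,\frac{2\log(\d N)}{\d}.
\]
Finally $\log(\d N)\ge\log(\d m)-\log2-o(1)\ge(1-\e/10)\log(\d m)$ once $\d m\ge C$ with $C=C_\e$ large, so choosing $\eta=\eta(\e)$ small makes the right‑hand side at least $(2-\e)\d^{-1}\log(\d m)$, as required. (The cited estimate for $\a(G(N,\d))$ is the leading‑order second‑moment bound when $\d N\to\infty$, and Frieze‑type precise asymptotics for sparse random graphs when $\d N$ is a large constant.)

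\textbf{Where the difficulty lies.} The first‑moment upper bound is routine modulo bookkeeping of constants. The substance is the lower bound in the regime $\d m=\TH(1)$: there $\om(G)=\TH(m)=\TH(1/\d)$, a constant fraction of the number of points in an $r$‑disc, so the $\log\log$ corrections hidden in the sparse‑independence estimate \emph{and} the constant factor lost in passing from $m$ to $N\approx m/2$ by pigeonhole are both comparable to the leading term; this is precisely why $C$ is allowed to depend on $\e$ and why the lower constant is $2-\e$ rather than $2$. Obtaining an absolute $C$ (or the sharp constant $2$) would require a genuine second‑moment computation for $G(n,r,p)$ itself, handling cliques spread across a disc of radius $r/\sqrt3$ and the overlap structure of nearby candidate cliques — a substantially more delicate calculation.
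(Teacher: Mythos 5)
Your proposal is correct and follows essentially the same route as the paper: a first-moment/union bound for the upper bound, and for the lower bound a pigeonhole argument placing $\Omega(nr^2)$ points in a cell of diameter at most $r$ followed by the known clique-number asymptotics of dense Erd\H{o}s--R\'enyi graphs (equivalently, the independence number of sparse ones). The only cosmetic difference is that you fold the geometric constraint directly into a single global first moment via the factor $(\pi r^2)^{k-1}$, whereas the paper first localizes cliques to $O(1/r^2)$ blocks containing $O(nr^2)$ points each and union-bounds over blocks; both calculations land in the same place.
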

We next consider connectivity.
\begin{theorem}\label{thm:connectivity}
    Let $\e>0$. There exists some $r_0>0$ such that we have the following for all $r\le r_0$. If $q \le (1-\e) \log n / n$, then w.h.p.~$G(n, r, p)$ is disconnected and in particular it has isolated vertices. On the other hand if we have $q \ge (1+\e) \log n / n$ then w.h.p.~$G(n, r, p)$ is connected.
\end{theorem}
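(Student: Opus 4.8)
Both directions follow the template of the classical analysis of $G(n,p)$, with the Erd\H{o}s--R\'enyi edge probability replaced throughout by $q=\p r^2p$, which is exactly the probability that a given pair is adjacent in $G(n,r,p)$. (All geometric computations are done on the torus and are valid once $r_0$ is a small enough absolute constant, so that $r$-disks have area $\p r^2$ with no wraparound.)

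\textbf{Disconnection.} Assume $q\le(1-\e)\log n/n$ and let $Z$ be the number of isolated vertices of $G=G(n,r,p)$. Conditioning on the number $D_v$ of points within distance $r$ of a fixed vertex $v$ (which is $\bin(n-1,\p r^2)$) and then revealing the $G(n,p)$ coin flips gives the exact identity $\Pr(v\text{ isolated})=\E[(1-p)^{D_v}]=(1-\p r^2p)^{n-1}=(1-q)^{n-1}$, so $\E[Z]=n(1-q)^{n-1}=n^{\e-o(1)}\rai$. For the second moment I would compute, for fixed positions of $u,v$, that $\Pr(u,v\text{ both isolated}\mid\text{positions})=(1-p)^{\ind{d(u,v)\le r}}\bigl(1-2q+p^2L(d(u,v))\bigr)^{n-2}$, where $L(s)$ is the area of intersection of two radius-$r$ disks at distance $s$. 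The only source of correlation is the term $p^2L$, which I control using $L(s)\le\p r^2$ pointwise together with $\E[L(d(u,v))]=(\p r^2)^2$ (Fubini, comparing with an auxiliary uniform point), via $(1-2q+p^2L)^{n-2}\le(1-2q)^{n-2}e^{(1+o(1))np^2L}$ and $e^x\le 1+xe^{x_{\max}}$. This yields $\sum_{u\ne v}\Pr(u,v\text{ both isolated})\le(1+o(1))\E[Z]^2\bigl(1+q^2n\,e^{(1+o(1))qpn}\bigr)$; since $q^2n\le\log^2n/n$ and $qpn\le qn\le(1-\e)\log n$, the error term is at most $\log^2n\cdot n^{-\e+o(1)}\to0$. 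Hence $\Var Z=o(\E Z^2)$, so $Z\ge 1$ \whp\ and $G$ is disconnected.

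\textbf{Connectivity.} Assume $q\ge(1+\e)\log n/n$; since $p\le1$ this also forces $n\p r^2\ge(1+\e)\log n$. I would show that \whp\ $G$ has no component of size $k$ for any $1\le k\le n/2$, which gives connectivity. For $k=1$ this is the isolated-vertex count: $\E[X_1]=n(1-q)^{n-1}\le n^{-\e+o(1)}\to0$. For $2\le k\le n/2$, let $X_k$ be the number of components of size $k$. If $S$ (with $|S|=k$) is a component then $G(n,r)[S]$ contains a spanning tree $T$ whose $k-1$ edges all survive, and no $G(n,r)$-edge between $S$ and $S^c$ survives; conditioning on the positions of $S$ and integrating out the other $n-k$ points (an outside point fails to attach to $S$ with probability at most $1-p|N_r(S)|$, where $N_r(S)=\bigcup_{x\in S}B(x,r)$, hence at most $e^{-p|N_r(S)|}$) gives
\[
\E[X_k]\ \le\ p^{k-1}\sum_{|S|=k}\ \sum_{T\text{ tree on }S}\ \E\Bigl[\ind{T\subseteq G(n,r)}\,e^{-p(n-k)|N_r(S)|}\Bigr].
\]
The key geometric input is the edgewise lower bound $|N_r(S)|\ge\p r^2+\sum_{e\in T}\bigl(\p r^2-L(|x_e|)\bigr)$, obtained by adding the disks of $S$ one vertex at a time along $T$, where $x_e$ is the displacement along edge $e$; the function $\p r^2-L(s)$ is nonnegative and concave on $[0,r]$, hence $\ge c_0rs$ for an absolute $c_0>0$. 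The standard tree change of variables (root $T$ and integrate the displacements over $B(0,r)$, unit Jacobian) makes the bound factor over the $k-1$ edges, and each factor is $\int_{B(0,r)}e^{-p(n-k)(\p r^2-L(|x|))}\,dx\le\int_0^\infty e^{-c_0p(n-k)rs}\,2\p s\,ds=O\!\bigl(1/(qpn^2)\bigr)$. Since $\binom{n}{k}$ counts the choices of $S$ and $k^{k-2}$ the trees, the factors of $p$ cancel and one obtains $\E[X_k]\le\binom{n}{k}k^{k-2}e^{-q(n-k)}\bigl(c_1/(qn^2)\bigr)^{k-1}$ for an absolute constant $c_1$ --- the classical $G(n,p)$ estimate for small components with $p$ replaced by $q$. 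Summing, $\sum_{k=2}^{n/2}\E[X_k]=O\bigl(e^{-qn}/q\bigr)=O\bigl(n^{-\e}/\log n\bigr)\to0$.

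\textbf{Main obstacle.} The disconnection direction is routine once one spots $\Pr(v\text{ isolated})=(1-q)^{n-1}$; the only care is the spatial correlation in the second moment, dispatched by $\E[L(d(u,v))]=(\p r^2)^2$. The real content is in the connectivity direction for medium and large $k$: the crude bound $|N_r(S)|\ge\p r^2$ only produces $e^{-q(n-k)}$ and loses the factor of $k$ in the exponent that the $G(n,p)$ argument gets for free from the $(1-p)^{k(n-k)}$ term, whereupon $\sum_k\E[X_k]$ diverges. Recovering that factor --- quantifying that a union of $k$ disks following a tree must have area growing linearly in $k$ unless many displacements are improbably short --- via the estimate $\p r^2-L(s)\gtrsim rs$ and the resulting convergent integral is where the work lies, and is the geometric content that distinguishes this from the Erd\H{o}s--R\'enyi case.
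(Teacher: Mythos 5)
Your disconnection argument is correct and follows the same second-moment-on-isolated-vertices route as the paper; your treatment of the spatial correlation via $\E[L(d(u,v))]=(\pi r^2)^2$ is careful and sound.

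The connectivity direction, however, rests on a geometric inequality that is false. You claim
$|N_r(S)|\ge \pi r^2+\sum_{e\in T}\bigl(\pi r^2-L(|x_e|)\bigr)$
by ``adding the disks of $S$ one vertex at a time along $T$.'' When you add a vertex $v$ with tree-parent $u$, the new area contributed is $\bigl|B_r(v)\setminus\bigcup_{w\ \mathrm{prior}}B_r(w)\bigr|$, which is \emph{not} bounded below by $|B_r(v)\setminus B_r(u)|=\pi r^2-L(d(u,v))$: the disks of earlier-added vertices other than the parent can also cover $B_r(v)$. Concretely, take all $k$ points of $S$ inside a ball of radius $\delta\ll r$, arranged so that every tree edge has length of order $\delta$. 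Then $N_r(S)\subseteq B_{r+\delta}$ gives $|N_r(S)|\le \pi r^2+O(r\delta)$, while your bound claims $\pi r^2+\Omega(r\delta k)$ (since $\pi r^2-L(s)\sim 2rs$), a contradiction for large $k$. The correct statement is that the area excess is governed by the \emph{diameter} of $S$, not the sum of tree-edge lengths, and with only a diameter gain the entropy factor $\binom{n}{k}k^{k-2}(c/(qn^2))^{k-1}$ against the single $e^{-qn}$ makes $\sum_k\E[X_k]$ diverge --- exactly the failure mode you identify as the main obstacle. So the obstacle is not actually overcome, and the first-moment-over-components strategy does not close for medium and large $k$.

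The paper avoids this entirely with a structural argument: it tiles the torus into cells of side $\eta r$ with $\eta\ll\e$, shows that w.h.p.\ every sufficiently populated (``good'') cell contains a component spanning almost all of its points (a ``great'' component), that every vertex has a neighbor in some great component, that great components in nearby cells are joined by an edge, and finally that the resulting auxiliary graph on great cells is connected via a Peierls-type argument ruling out components enclosed by rows and columns of great cells. If you want to keep a component-counting flavor, you would need something of this discretized form (or Penrose's analogous argument for pure random geometric graphs) rather than the spanning-tree first moment.
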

We next consider Hamiltonicity. Our result here is not as precise as that for connectivity.
\begin{theorem}\label{thm:hamiltonicity}
There exists some $r_0>0$ and $K>0$ such that we have the following for all $r\le r_0$, if $q \ge K\log n / n$ then w.h.p.~$G(n, r, p)$ is Hamiltonian.
\end{theorem}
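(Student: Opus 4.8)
The plan is to prove Hamiltonicity by a tessellation argument that reduces everything to the Hamilton-connectedness of a binomial random graph inside each cell. Fix $r\le r_0$ for a small absolute constant $r_0$, and partition the torus into a $k\times k$ grid of square cells of side $\ell=1/k$, where $k$ is the least even integer with $k\ge 3/r$; then $\ell\in[r/4,r/3]$, so any two points lying in the same cell, or in two grid-adjacent cells, are within Euclidean distance $r$. Consequently, if $A$ is a cell and $B$ a grid-neighbour of $A$, then in $G(n,r,p)$ a pair of vertices with both endpoints in $A$, or with one endpoint in $A$ and one in $B$, is an edge exactly when it is an edge of $G(n,p)$; in particular $G(n,r,p)[A]$ is distributed as $G(N_A,p)$, where $N_A$ is the number of points in $A$.

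Since $q=\p r^2p\ge K\log n/n$ gives $nr^2p=nq/\p\ge K\log n/\p$, each cell has expected count $n\ell^2\ge nr^2/16$, so by a Chernoff bound, w.h.p.\ every cell has $N_A=\ooi n\ell^2$; call this event $\cN$. On $\cN$ we have $N_Ap\ge \tfrac12 n\ell^2p\ge nr^2p/32\ge K\log n/(32\p)=:C_0\log n$ for every cell, where $C_0=C_0(K)\to\infty$ as $K\to\infty$, and the number of cells $M=k^2\le 16/r^2\le n$ for $n$ large. Now expose $G(n,p)$. List the cells in a snake-like (boustrophedon) cyclic order $A_1,\dots,A_M$ so that consecutive cells (cyclically) are grid-adjacent, which is possible since $k$ is even. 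The vertex-pairs we will use --- those inside a single cell, and those joining two cyclically consecutive cells --- form pairwise disjoint families, so the corresponding restrictions of $G(n,p)$ are mutually independent. Conditional on $\cN$, let $\cE_{\mathrm{cross}}$ be the event that between each consecutive pair $A_i,A_{i+1}$ there are at least $\tfrac12 N_{A_i}N_{A_{i+1}}p$ edges and every vertex of $A_i$ has a neighbour in $A_{i+1}$ (and symmetrically), and let $\cE_{\mathrm{in}}$ be the event that $G(n,r,p)[A]$ is Hamilton-connected for every cell $A$. Both hold w.h.p.\ given $\cN$: $\cE_{\mathrm{cross}}$ by Chernoff bounds and a union bound over $\le n$ vertices and $M\le n$ pairs, and $\cE_{\mathrm{in}}$ because the failure probability for a single cell is $O(n^{-2})$ once $C_0$ is a large enough constant (using $N_Ap\ge C_0\log n\ge C_0\log N_A$). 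Moreover $\cE_{\mathrm{cross}}$ and $\cE_{\mathrm{in}}$ are conditionally independent given $\cN$, being measurable with respect to disjoint parts of $G(n,p)$.

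On $\cN\cap\cE_{\mathrm{cross}}\cap\cE_{\mathrm{in}}$ I assemble a Hamilton cycle. Using only $\cE_{\mathrm{cross}}$, greedily pick link edges $L_i=\{v_i,u_{i+1}\}$ with $v_i\in A_i$ and $u_{i+1}\in A_{i+1}$: take $L_1$ arbitrary, and for $i=2,\dots,M-1$, having fixed $u_i$ (the $A_i$-endpoint of $L_{i-1}$), pick $v_i\in A_i\setminus\{u_i\}$ with a neighbour $u_{i+1}\in A_{i+1}$ --- possible since $N_{A_i}\ge 2$ and every vertex of $A_i$ has a neighbour in $A_{i+1}$ --- and finally take $L_M$ to be an edge between $A_M$ and $A_1$ avoiding $u_M$ and $v_1$, which exists because there are $\gg N_{A_M}+N_{A_1}$ edges between $A_M$ and $A_1$. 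Each cell $A_i$ now carries two distinct vertices $u_i\ne v_i$. Since the $L_i$ depend only on the cross-part of $G(n,p)$, conditioning on them leaves each $G(n,r,p)[A_i]$ distributed as a fresh $G(N_{A_i},p)$, so by $\cE_{\mathrm{in}}$ there is a Hamilton path $P_i$ of $G(n,r,p)[A_i]$ from $u_i$ to $v_i$; then $\bigcup_i(P_i\cup\{L_i\})$ is a Hamilton cycle of $G(n,r,p)$. As $\Pr[\cN\cap\cE_{\mathrm{cross}}\cap\cE_{\mathrm{in}}]=1-o(1)$, this proves the theorem.

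The main obstacle is the per-cell input $\cE_{\mathrm{in}}$: we need $G(N,p)$ with $Np\ge C_0\log n$ to be Hamilton-connected with failure probability $o(1/M)$, uniformly over $N\le n$, where $N$ can be as small as $\Theta(\log n)$ when $p$ is not too small. "W.h.p.\ Hamilton-connected near the threshold" is not by itself enough; what is needed is the quantitative statement that the failure probability is at most $\mathrm{poly}(N)\,e^{-\Omega(Np)}$, which for $C_0$ a large absolute constant is at most $\mathrm{poly}(n)\,n^{-\Omega(C_0)}\le n^{-2}$. This is what the Pósa rotation--extension analysis of Hamiltonicity and Hamilton-connectedness in binomial random graphs yields in the range $Np=\Omega(\log n)$ (and it can also be derived from the relevant hitting-time/resilience results); alternatively one runs that argument directly inside each cell, which is routine given the generous slack. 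The remaining ingredients --- the Chernoff estimates for cell counts and cross-edges, the independence bookkeeping, and the greedy linking --- are routine.
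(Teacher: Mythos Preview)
Your argument is correct and follows the same tessellate-then-link template as the paper, but the mechanics differ in two places. The paper first splits the edge probability via $1-p=(1-p_1)(1-p_2)$, uses $G_{n,p_1}$ to place a Hamilton \emph{cycle} in each cell (cells of side $r/\sqrt5$, snake-ordered), and then uses the independent layer $G_{n,p_2}$ to merge consecutive cycles by the two-edge swap: find consecutive edges $\{a_i,b_i\}\in P_{i,+}$, $\{c_i,d_i\}\in P_{i+1,-}$ with both cross-edges $\{a_i,c_i\},\{b_i,d_i\}$ present, and trade the former pair for the latter. You instead keep $p$ intact, exploit that the within-cell and between-consecutive-cell potential edges are disjoint (hence independent) families in $G(n,p)$, and appeal to Hamilton-\emph{connectedness} of each $G(N_A,p)$ so that once the link endpoints $u_i,v_i$ are fixed from the cross part you can thread a Hamilton path through each cell. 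Your route trades the sprinkling layer and the cycle-merging combinatorics for a slightly stronger per-cell black box; the paper's route needs only Hamiltonicity (with the same $\mathrm{poly}(N)e^{-\Omega(Np)}$ tail you invoke) but must pay for it with the $p_1/p_2$ split to keep the cross-edges fresh. Both rely on exactly the same quantitative input from the P\'osa rotation--extension analysis, so neither has a real technical edge; yours is a little cleaner bookkeeping-wise, while the paper's avoids citing Hamilton-connectedness.
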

After this we consider the independence number.
\begin{theorem}\label{thm:ind}
    For each fixed $\e>0$ and fixed $p$ with $0<p<1$ we have the following. If $nr^2 \ge n^\e$, then 
    \begin{equation}\label{eqn:ind}
     \alpha(G) = \Theta\brac{r^{-2}\log_{1/(1-p)}(nr^2)}
\end{equation}
\end{theorem}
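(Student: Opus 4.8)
The plan is to prove both bounds by reducing, inside a single small square of the torus, to the independence number of an Erd\H{o}s--R\'enyi graph on $\Theta(nr^2)$ vertices, and then to tile or pack $\Theta(r^{-2})$ such squares. Write $b=1/(1-p)$, so the target is $\alpha(G)=\Theta\brac{r^{-2}\log_b(nr^2)}$; since $p$ is fixed, $b$ is a fixed constant $>1$ and $\log_b(nr^2)=\Theta(\log(nr^2))$. Only two geometric facts are needed: (i) points all lying in an axis--parallel square of side $\le r/\sqrt2$ are pairwise within Euclidean distance $r$, hence pairwise adjacent in $G(n,r)$; and (ii) points lying in two squares at distance $>r$ from each other are non--adjacent in $G(n,r)$. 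We also use that for a square $Q$ of area $a$ the number of the $n$ points falling in $Q$ is $\bin(n,a)$, so since $nr^2\ge n^\e$ a Chernoff bound plus a union bound over the $O(r^{-2})\le n$ squares in play shows that \whp\ every relevant square contains $\Theta(na)$ points.

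\textbf{Upper bound.} Tile the torus into $N=\Theta(r^{-2})$ cells, each an axis--parallel square of side $\le r/\sqrt2$, and condition on the ($\whp$) event that every cell contains $m=\Theta(nr^2)$ points. By fact (i), for each cell $C$ the subgraph of $G$ induced on the points of $C$ is the intersection of a complete graph with $G(n,p)$, hence distributed as $G(m,p)$. A first--moment bound gives, for $k=\rdup{3\log_b m}$,
\[
\Pr\brac{\alpha(G(m,p))\ge k}\le \binom{m}{k}(1-p)^{\binom k2}\le m^{\,k\brac{1-\frac{k-1}{2\log_b m}}}=m^{-\Omega(\log_b m)},
\]
which is $n^{-\omega(1)}$ since $m\ge n^\e$. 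A union bound over the $\le n$ cells shows that \whp\ every cell induces independence number at most $3\log_b m=O(\log_b(nr^2))$. Since any independent set of $G$ meets each cell in an independent set of the induced subgraph, \whp\ $\alpha(G)\le N\cdot O(\log_b(nr^2))=O\brac{r^{-2}\log_b(nr^2)}$.

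\textbf{Lower bound.} Pack $N=\Theta(r^{-2})$ pairwise disjoint axis--parallel squares $Q_1,\dots,Q_N$ into the torus, each of side $r/(2\sqrt2)$ and any two of them at distance $>r$; this is possible since each square with a surrounding buffer of width $\Theta(r)$ occupies area $\Theta(r^2)$. By fact (ii), points in distinct $Q_i$ are non--adjacent in $G$, so $\alpha(G)\ge\sum_{i=1}^N\alpha\brac{G[Q_i]}$, where $G[Q_i]$ is the subgraph of $G$ induced on the points of $Q_i$. As above, \whp\ each $Q_i$ contains $m_i=\Theta(nr^2)$ points, and by fact (i) $G[Q_i]$ is then distributed as $G(m_i,p)$. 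It remains to show that \whp,\ for every $i$ simultaneously, $\alpha(G(m_i,p))=\Omega(\log_b(nr^2))$. Take $k=\rdup{\log_b m}$ and let $X$ count independent $k$--sets of $G(m,p)$; then $\mathbb E X=\binom mk(1-p)^{\binom k2}=\exp\brac{\Omega\brac{(\log m)^2}}\to\infty$ super--polynomially, and the clustering term $\bar\Delta=\sum_{|A\cap B|\ge2}\Pr(A,B\text{ both independent})$ satisfies $(\mathbb E X)^2/\bar\Delta=m^{2-o(1)}$, so Janson's inequality yields $\Pr(X=0)\le\exp\brac{-m^{\Omega(1)}}$. A union bound over the $\le n$ squares gives $\alpha(G[Q_i])\ge\log_b m_i=\Omega(\log_b(nr^2))$ for all $i$ \whp, whence $\alpha(G)\ge N\cdot\Omega(\log_b(nr^2))=\Omega\brac{r^{-2}\log_b(nr^2)}$.

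\textbf{Main obstacle.} The geometric ingredients (tiling, packing with separation $>r$, and the binomial concentration of cell occupancies, where the hypothesis $nr^2\ge n^\e$ enters) are routine. The one point requiring real care is the tail estimate $\Pr\brac{\alpha(G(m,p))<c\log_b m}=n^{-\omega(1)}$ used in the lower bound: a bare ``$\whp$'' statement about $\alpha(G(m,p))$ does not suffice, because we must union bound over $\Theta(r^{-2})$ regions, so one must invoke Janson's inequality (or an equivalent second--moment/concentration argument) to obtain a super--polynomially small failure probability. Everything else is bookkeeping.
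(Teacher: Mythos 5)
Your proof is correct and follows essentially the same route as the paper: tile into cells of side $O(r)$ so each cell induces a clique in $G(n,r)$ and hence an Erd\H{o}s--R\'enyi graph $G(\Theta(nr^2),p)$, use a first-moment bound per cell for the upper bound, and sum independence numbers over well-separated cells for the lower bound. The one substantive refinement is that you explicitly supply (via Janson) the superpolynomially small failure probability needed to union-bound the lower bound over $\Theta(r^{-2})$ cells, a point the paper passes over with a bare ``w.h.p.'' citation.
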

We then consider the chromatic number.
\begin{theorem}\label{thm:color}
    For each fixed $\e>0$ and fixed $p$ with $0<p<1$ we have the following. If $nr^2 \ge n^\e$, then 
    \begin{equation}\label{eqn:ind}
     \chi(G) = \Theta\brac{ \frac{nr^2}{\log_{1/(1-p)}(nr^2)}}
\end{equation}
\end{theorem}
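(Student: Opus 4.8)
The plan is to obtain the lower bound directly from the independence‑number estimate of Theorem~\ref{thm:ind}, and the upper bound by tiling the torus into cells of side $\Theta(r)$ on which $G$ is literally an Erd\H{o}s--R\'enyi graph. \emph{Lower bound.} Every proper colouring partitions $V(G)$ into colour classes, each an independent set, so $\chi(G)\ge n/\alpha(G)$ deterministically. The hypotheses of Theorem~\ref{thm:ind} coincide with ours, so \whp{} $\alpha(G)=O\!\big(r^{-2}\log_{1/(1-p)}(nr^2)\big)$, whence \whp{}
\[
\chi(G)\ \ge\ \frac{n}{\alpha(G)}\ =\ \Omega\!\left(\frac{nr^2}{\log_{1/(1-p)}(nr^2)}\right).
\]

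\emph{Upper bound.} Partition $D$ into a grid of $N=\Theta(r^{-2})$ axis‑parallel squares of common side $s$, where $s=\Theta(r)$ is chosen with $s\sqrt2\le r$ (perturbing $s$ by a negligible amount if one insists that the grid tile the torus exactly). Since any two points in a common cell are at Euclidean distance at most $s\sqrt2\le r$, they are adjacent in $G(n,r)$; hence, conditioned on the set of the $n$ points that land in cell $i$, the subgraph of $G=G(n,r,p)$ induced on cell $i$ is exactly $G(m_i,p)$, where $m_i$ is the number of points in cell $i$. As $\E m_i=ns^2=\Theta(nr^2)\ge\Omega(n^\e)$, a Chernoff bound gives $m_i=\Theta(nr^2)$ with failure probability $e^{-\Omega(n^\e)}$, and a union bound over the $N\le n$ cells shows that \whp{} $m_i=\Theta(nr^2)$ for every cell simultaneously.

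Next I would invoke the classical fact that, for fixed $p\in(0,1)$, one has $\chi(G(m,p))=\Theta(m/\log m)$ \whp, in the quantitative one‑sided form $\Pr[\chi(G(m,p))>C_p\,m/\log m]\le e^{-m^{\Omega(1)}}$ for a suitable constant $C_p$; this follows from the standard greedy‑colouring argument (repeatedly delete an independent set of size $\Omega(\log m)$, using Janson's inequality to bound the chance that some linear‑sized vertex subset fails to contain one) or from the martingale concentration of $\chi$. Because $m_i\ge n^\e$ on the good event above, this failure probability is $o(1/n)$, so \whp{} $\chi\big(G[\text{cell }i]\big)=O\!\big(nr^2/\log(nr^2)\big)$ for all $i$ at once. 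To assemble these colourings without multiplying the number of colours by $N$, introduce the \emph{conflict graph} $H$ on the cell set, joining two cells when they contain points within distance $r$ of each other; since $s=\Theta(r)$, each cell conflicts with only $O(1)$ others, so $\Delta(H)=O(1)$ and $H$ admits a proper colouring into $\kappa=O(1)$ classes. Cells in the same $H$‑class contain no $G$‑edges between them, so $G$ restricted to their union can be coloured by reusing a single palette of $\max_i\chi(G[\text{cell }i])$ colours; taking $\kappa$ disjoint palettes yields a proper colouring of $G$ with $\kappa\cdot O\!\big(nr^2/\log(nr^2)\big)=O\!\big(nr^2/\log(nr^2)\big)$ colours. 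Since $p$ is a fixed constant, $\log(nr^2)$ and $\log_{1/(1-p)}(nr^2)$ agree up to a constant factor, so this matches the claimed order.

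The only step requiring genuine care beyond bookkeeping is the \emph{simultaneous} control of $\chi(G(m_i,p))$ over all $\Theta(r^{-2})$ cells: one needs the $\chi(G(m,p))$ upper bound with an error probability small enough to survive the union bound, which is automatic in this range because each $m_i$ is at least $\Theta(n^\e)$. The Chernoff step, the bounded degree of the conflict graph, and the palette reuse are all elementary.
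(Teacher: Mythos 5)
Your proposal is correct and follows essentially the same route as the paper: the lower bound via $\chi\ge n/\alpha$ and Theorem \ref{thm:ind}, and the upper bound by tiling into $\Theta(r)$-cells, bounding $\chi(G(m,p))=O(m/\log_{1/(1-p)}m)$ within each cell with failure probability small enough for a union bound, and reusing $O(1)$ palettes on well-separated cells (the paper uses $4$ explicit parity classes where you use a bounded-degree conflict graph, a cosmetic difference).
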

We finally consider the diameter.
\begin{theorem}\label{thm: diameter}
    If $r<1/2$ and $nr^2p \ge \log^2 n$ then w.h.p.
    \[
\diam(G) = \Theta\brac{ r^{-1} + \frac{\log n}{\log(nr^2p)}}.
\]
\end{theorem}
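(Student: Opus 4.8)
The plan is to prove matching upper and lower bounds on $\diam(G)$ where $G=G(n,r,p)$ under the hypothesis $r<1/2$ and $nr^2p\ge \log^2 n$.

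The lower bound $\diam(G) = \Omega(r^{-1})$ is geometric and essentially deterministic: two points at torus distance close to $1/2$ require at least roughly $1/(2r)$ hops since each edge of $G$ is also an edge of $G(n,r)$ and hence spans Euclidean distance at most $r$. Whp there exist pairs of vertices at distance $\ge 1/2 - o(1)$, so $\diam(G) \ge (1-o(1))/(2r)$. The lower bound $\diam(G) = \Omega(\log n / \log(nr^2p))$ comes from a degree/volume count: any vertex $v$ has degree at most $O(nr^2p)$ whp (actually $\ooi$ times its mean, by a Chernoff bound since $nr^2p\ge\log^2 n$, plus a union bound over $n$ vertices), and more generally the ball of radius $k$ around $v$ in $G$ has size at most $O(nr^2p)^k$ by iterating; but it also has size at most $O(n \cdot (\min(kr,1))^2) $ from the geometry. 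Taking $k$ to be a small constant times $\log n/\log(nr^2p)$, we get that this ball has fewer than $n$ vertices (here we must be a little careful: we want a ball that is both small in the graph sense and small in the geometric sense, so the relevant regime is $kr = o(1)$, i.e. $k = o(r^{-1})$; one checks that the binding constraint is whichever of $r^{-1}$ and $\log n/\log(nr^2p)$ is larger, and in the other regime the first term $r^{-1}$ already dominates). Hence $\diam(G)$ exceeds this $k$.

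For the upper bound $\diam(G) = O(r^{-1} + \log n/\log(nr^2p))$, the idea is to tile the torus $D$ with a grid of square cells of side length $\ell := c_0 r$ for a suitably small constant $c_0$, so that any two points in the same cell or in adjacent cells are within Euclidean distance $r$. There are $\Theta(r^{-2})$ cells, each containing $\Theta(nr^2)$ points whp (again a Chernoff bound plus union bound; here $nr^2 \ge nr^2 p \ge \log^2 n$ so concentration holds). The first key claim is that whp, for every pair of adjacent cells $A, B$, there is an edge of $G$ between them: the number of candidate pairs is $\Theta((nr^2)^2)$ and each is present in $G$ with probability $p$, so the expected number of crossing edges is $\Theta((nr^2)^2 p) = \Theta(nr^2 \cdot nr^2 p) \ge \Theta(nr^2\log^2 n) \gg \log n$, and a union bound over $O(r^{-2}) \le O(n)$ pairs of cells works. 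This gives a path through the "cell adjacency graph" (a toroidal grid graph on $\Theta(r^{-2})$ vertices, whose diameter is $\Theta(r^{-1})$) witnessing that any two cells are connected in $G$ by a path of length $O(r^{-1})$; so far this gives $\diam(G) = O(r^{-1})$, which is the right answer only when the first term dominates. The second key claim handles the regime where $\log n/\log(nr^2p)$ dominates — equivalently, where $nr^2p$ is super-polynomially large in some local sense — and is really about the diameter \emph{within} a single cell or small cluster of cells: inside a cell $A$, all $\Theta(nr^2)$ points are pairwise within distance $r$, so the induced subgraph is exactly $G(m, p)$ with $m = \Theta(nr^2)$; and within $O(1)$ adjacent cells it contains $G(m,p)$ as a spanning subgraph. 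A classical fact about $G(m,p)$ with $mp \gg \log m$ is that its diameter is $\Theta(\log m/\log(mp))$. We want to move between two far-apart cells cheaply: walk along the cell-grid path, but observe that we do not need one hop per cell — within each "block" of consecutive cells of geometric size $\Theta(r)$ we can make a single $G(m,p)$-jump of logarithmically many edges that covers many cells at once is not quite the mechanism; rather, the point is that the \emph{total} path has length $O(r^{-1})$ coming from the grid, \emph{plus} at the two endpoints we may need up to $O(\log m/\log(mp))$ extra steps to reach the specific target vertices from "hub" vertices in their cells, and since $\log m = O(\log n)$ and $mp = \Theta(nr^2p)$ this extra cost is $O(\log n/\log(nr^2p))$. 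Combining, $\diam(G) = O(r^{-1} + \log n/\log(nr^2p))$.

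The main obstacle I expect is making the upper-bound argument tight when $\log n/\log(nr^2p)$ is the dominant term, i.e. when $nr^2p$ is polynomially small in $n$ (so $\log(nr^2p) = \Theta(\log n)$ and the bound is $\Theta(1)$) versus when $nr^2p$ is only barely super-$\log^2 n$ (so the bound is $\Theta(\log n/\log\log n)$). In the former case $r$ is not too small and $r^{-1} = \Omega(1)$ is already the truth up to constants, so there's nothing delicate; the genuinely delicate case is intermediate. The cleanest route is probably: (i) reduce to the grid graph on cells and show its $G$-diameter is $\Theta(r^{-1})$ via the crossing-edge claim; (ii) separately, for \emph{any} fixed cell, show that every vertex in it is within $O(\log n/\log(nr^2p))$ steps (in $G$) of a fixed designated vertex of that cell, using the $G(m,p)$ diameter result with $m=\Theta(nr^2)$ and the bound $mp \ge \log^2 n$ (the hypothesis is exactly what makes the BFS-expansion argument for $G(m,p)$ go through, and the Chernoff bounds quantitative enough for a union bound over $O(r^{-2})$ cells); (iii) concatenate. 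One must check that the error term $r^{-1}$ never dominates the true diameter incorrectly — it cannot, because of the geometric lower bound — and that the two terms' max equals their sum up to constants, which is trivial. The routine-but-necessary technical work is the battery of Chernoff-plus-union-bound estimates (cell occupancies, vertex degrees, crossing edges, BFS layer growth), all of which are comfortably within reach because every relevant expected count is $\Omega(\log^2 n)$ or larger, well above the $\Theta(\log n)$ threshold needed for union bounds over $\mathrm{poly}(n)$ events.
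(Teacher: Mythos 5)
Your lower bound is essentially the paper's: the geometric $\Omega(r^{-1})$ bound from two occupied far-apart cells, and the ball-growth bound $\diam(G)>k$ whenever $(O(nr^2p))^k<n/2$, using the whp maximum degree $O(nr^2p)$ (which needs only $nr^2p\gg\log n$, comfortably implied by the hypothesis). The extra geometric constraint on the ball volume that you worry about is not needed; the pure counting bound already suffices in every regime. Your upper bound, however, takes a genuinely different route. The paper runs a single induction along a sequence of $\ell=\max\cbrac{\sqrt2/r,\ \log(n'/10)/\log(n'p/2)}$ overlapping cells, growing sets $S_i\subseteq C_i$ of neighbors of $S_{i-1}$ until they saturate at $\Theta(nr^2)$ points and then transporting a set of that size from cell to cell, so both terms of the bound fall out of the one parameter $\ell$. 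You instead separate transport (one hop per cell along the grid, length $O(r^{-1})$) from expansion (an $O(\log n/\log(nr^2p))$ correction at the endpoints via the diameter of $G(m,p)$ inside a cell). That decomposition is more modular and can be made to work, provided the $G(m,p)$ diameter statement is used in a form with failure probability $o(r^2)$ so it survives the union bound over cells.

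But as written your transport step has a genuine gap. Knowing that every pair of adjacent cells contains \emph{at least one} crossing edge of $G$ does not produce a path of length $O(r^{-1})$: if the crossing edge from $C_{i-1}$ to $C_i$ arrives at $b_i\in C_i$ and the crossing edge from $C_i$ to $C_{i+1}$ departs from $a_i\in C_i$, then $b_i$ and $a_i$ are within Euclidean distance $r$ but are adjacent in $G$ only with probability $p$, so consecutive crossing edges need not concatenate. Inserting your endpoint mechanism at every intermediate cell to bridge these gaps would cost $O\brac{r^{-1}\cdot\log n/\log(nr^2p)}$, which is too large. The fix is to prove the stronger per-vertex statement that whp \emph{every} vertex has a neighbor in \emph{every} cell adjacent to (or equal to) its own: for a fixed vertex and cell this fails with probability $(1-p)^{\Omega(nr^2)}\le e^{-\Omega(nr^2p)}=e^{-\Omega(\log^2 n)}$, which survives a union bound over all $n$ vertices and $O(1)$ cells each; a greedy walk then gives the $O(r^{-1})$ transport and your endpoint correction finishes the argument. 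Note that your expected-count computation $\Theta((nr^2)^2p)$ is calibrated to the weaker ``some crossing edge exists'' claim and does not by itself deliver the per-vertex statement you actually need.
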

\section{Degree sequence}

 The degree of a vertex is distributed as $\bin(n-1,q)$. We will use the following standard tool to show concentration.

\begin{theorem}[Chernoff--Hoeffding bound] \label{thm:chernoff}
    Let $X$ be distributed as $Bin(m, s)$ and $0 < \d < 1$. Then 
    \[
    \Pr(|X-ms|>\d ms) \le 2 \exp(-\d^2 ms/3)
    \]
\end{theorem}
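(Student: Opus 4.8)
The final statement is the classical Chernoff--Hoeffding bound for binomials, and the plan is the standard exponential‑moment argument applied separately to the two tails. Write $\mu = ms$ and recall that $X = \sum_{i=1}^m Y_i$ with the $Y_i$ i.i.d.\ Bernoulli$(s)$, so that for any real $t$,
\[
\E e^{tX} = \brac{1 + s(e^t - 1)}^m \le \exp\brac{\mu(e^t-1)},
\]
using $1+x \le e^x$. For the upper tail, take $t = \log(1+\d) > 0$ and apply Markov's inequality to $e^{tX}$:
\[
\Pr\brac{X \ge (1+\d)\mu} \le e^{-t(1+\d)\mu}\,\E e^{tX} \le \exp\brac{\mu\brac{\d - (1+\d)\log(1+\d)}}.
\]
For the lower tail, take $t = -\log(1-\d) > 0$ (here we use $\d < 1$) and apply Markov's inequality to $e^{-tX}$:
\[
\Pr\brac{X \le (1-\d)\mu} \le e^{t(1-\d)\mu}\,\E e^{-tX} \le \exp\brac{-\mu\brac{(1-\d)\log(1-\d) + \d}}.
\]

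It then remains to bound the two exponents by elementary calculus: one checks that
\[
(1+\d)\log(1+\d) - \d \ge \tfrac{\d^2}{3} \qquad\text{and}\qquad (1-\d)\log(1-\d) + \d \ge \tfrac{\d^2}{2} \qquad (0 \le \d \le 1).
\]
Each follows by noting that the left‑minus‑right difference vanishes at $\d=0$ and has nonnegative derivative on $[0,1]$: for the second, the first derivative is $-\log(1-\d) - \d$, which itself vanishes at $0$ and has derivative $\d/(1-\d) \ge 0$; for the first, the first derivative is $\log(1+\d) - \tfrac{2}{3}\d$, which vanishes at $0$, is increasing on $[0,\tfrac{1}{2}]$ and decreasing on $[\tfrac{1}{2},1]$, and is still positive at $\d=1$ (since $\log 2 > \tfrac{2}{3}$), hence nonnegative throughout $[0,1]$. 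Note the restriction $\d \le 1$ is genuinely needed for the first inequality.

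Combining these with the two displays gives $\Pr(X \ge (1+\d)\mu) \le e^{-\d^2\mu/3}$ and $\Pr(X \le (1-\d)\mu) \le e^{-\d^2\mu/2} \le e^{-\d^2\mu/3}$. Since $\set{|X - \mu| > \d\mu} \subseteq \set{X \ge (1+\d)\mu} \cup \set{X \le (1-\d)\mu}$, a union bound yields $\Pr(|X - \mu| > \d\mu) \le 2e^{-\d^2\mu/3}$, which is the claim. There is no real obstacle here: the bound is a textbook fact, stated only to fix the constant and notation for the degree‑sequence estimates that follow, and the only computation of any substance is the verification of the two scalar inequalities above, which is precisely where the hypothesis $0<\d<1$ is used.
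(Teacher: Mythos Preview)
Your argument is the standard exponential-moment proof and is correct; the two scalar inequalities are verified properly, and the use of the hypothesis $0<\d<1$ is identified in the right places.

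The paper itself gives no proof of this statement: it is quoted as a standard tool and used immediately afterwards without justification. So there is nothing to compare against; you have simply supplied the textbook derivation that the paper omits.
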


Suppose $q= \om( \log n /n)$. Letting $\d = [4\log n / nq]^{1/2}=o(1)$, we see that 
\[
\Pr\Big(\Big|deg(v)- (n-1)q \Big| > \d (n-1)q\Big) \le 2 \exp(-\d^2 (n-1)q/3) = o(1/n).
\]
Thus by the union bound, w.h.p.~every vertex in $G(n, r, p)$ has degree $(1+o(1))(n-1)q = (1+o(1))nq$.

\section{Cliques: proof of Theorem \ref{thm:clique}}\label{cliques}
\begin{proof}
  We first prove the upper bound. We partition $D$ into $1/r^2$ cells with side length $r$, and we say that a {\em block} consists of a 10 by 10 set of cells. Each block is anchored by a single cell in the top left hand corner. Thus there are $1/r^2$ blocks. We first reveal the location of the points. The probability that the $i$th point belongs to a given block is $100r^2$, and thus the expected number of points in each block is $100nr^2$, and the Chernoff bounds tell us that the probability that a given block has more than $200nr^2$ points is at most $e^{-100nr^2/3}, $  which is $o(r^2)$ for $r\geq \left(\frac{\log n}{n}\right)^{1/2}$, and in particular this bound on $r$ implies that, by a union bound over the $1/(100r^2)$ blocks, no block has more than $200nr^2$ points.
  
  Any clique must have all of its vertices lying in one of our blocks. The graph induced on the vertices in a block is distributed as $G(n', p)$ where $n' \le 200nr^2$. For our upper bound we can take $n'=200nr^2$ since adding more points to a block cannot decrease the clique size. The expected number of cliques of size $k:=\frac{2\log\sqbrac{(1-p)nr^2}}{1-p}$ in  $G(n', p)$ is then at most
\begin{align*}
    \binom {n'}k p^{\binom k2} \le \brac{\frac{en' p^\frac{k-1}{2}}{k}}^k
& \le \brac{\frac{O(1) nr^2 p^{\frac{2\log\sqbrac{(1-p)nr^2}}{2(1-p)}}}{k}}^k \\
&= \brac{\frac{O(1) nr^2 \sqbrac{(1-p)nr^2}^{\frac{2\log p}{2(1-p)}}}{k}}^k\\
    & = \brac{\frac{O(1) nr^2 \sqbrac{(1-p)nr^2}^{-1}}{\frac{\log\sqbrac{(1-p)nr^2}}{1-p}}}^k\text{ since $\log p/(1-p)=-(1+o(1))$}\\
    & = \brac{\frac{O(1)  }{\log\sqbrac{(1-p)nr^2}}}^k\\
    & =\bfrac{O(1)}{\log C}^{\Omega(\log^2 n)}
\end{align*}
which (for large $C$) is small enough to beat a union bound over the blocks. Thus w.h.p.~$G$ has no clique of size $k$. This completes the proof of the upper bound. 

Now we prove the lower bound. Here we use cells with side length $r/\sqrt{2}$. By the pigeonhole principle one of the cells contains at least $n'':=nr^2/2$ points, which are all within distance $r$ of each other. W.h.p.~(see, for example, \cite{FK}) since $C/(nr^2) \le 1-p \le \log^{-3}n$,  $G(n'', p)$ has a clique of size at least 
\[
\frac{(2-\e/2)\log[(1-p)n'']}{1-p} \ge \frac{(2-\e)\log\sqbrac{(1-p)nr^2}}{1-p}.
\]
{\bf Explanation:} to justify the first expression for the clique number, observe that for small $\r$ we have $\alpha(G(n, \r)) = (2 \pm \e) \log (n\r)/\r$ and so for $p=1-\r$ close to 1 we have $\omega(G(n, p)=(2 \pm \e) \log (n(1-p))/(1-p)$. This completes the proof of the upper bound. 
\end{proof}
\section{Connectivity: proof of Theorem \ref{thm:connectivity}}
We will use $r \le r_0$, for example, to conclude that a ball of radius $r$ has volume $\pi  r^2$ (since we have wrap around, this ball would intersect itself if $r$ were too big).

\begin{proof}
    
Suppose first that $q \le (1-\e)\log n / n$.  Let $X$ be the number of isolated vertices, so 
\[
\E[X] = n(1-q)^{n-1} =n \exp \Big(-(n-1)q + O(nq^2)\Big) \ge (1+o(1))n^\e.
\]
We will now bound the expected number of pairs of isolated vertices. Fix two points $u,v \in [0, 1]^2$. Say the volume of $B_r(u) \cap B_r(v)$ is $x$. The probability that a third vertex $w$ is nonadjacent to both $u$ and $v$ is 
\[
1-\Big(2\pi r^2-x\Big)p -x\Big(1-(1-p)^2\Big) = 1 - 2\pi r^2p - xp(1-p) \le 1-2q.
\]
Thus the expected number of pairs $u, v$ that are both isolated is at most
\[
n^2 (1-2q)^{n-2} \le n^2 \exp\Big(-2(n-2)q\Big) = (1+o(1))(\E[X])^2.
\]
Thus, by the second moment method, w.h.p.~$X>0$ and we have isolated vertices.

Now suppose that $q= (1+\e) \log n / n$. Note that this case suffices for all $q \ge (1+\e) \log n / n$ since the property of being  connected is monotone in $p$ (and also in $r$). We use cells of side length $\l := \eta r$ for some $\eta \ll \e$. 
We say a cell is {\em good} if it has at least $g:=\eta^2 n  \l^2$ points in it, and {\em bad} otherwise. We say that a cell is {\em great} if it is has say $x \ge g$ points (i.e.~it is good) and it has a connected component on at least $x-g/2$ vertices. We say that such a component is a {\em great component}
\begin{claim}\label{clm:greatneighbor}
    W.h.p.~every vertex has a neighbor in a great component.
\end{claim}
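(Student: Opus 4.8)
The plan is to reveal the point locations first — this determines $G(n,r)$, hence which cells are good — and only afterwards to use the independent $G(n,p)$ edges. The picture is that a vertex $v$ lies within Euclidean distance $r$ of every point in any of the $\Theta(\eta^{-2})$ cells contained in the disk $B_r(v)$; the number of such points is concentrated around $n\pi r^2 = nq/p = (1+\e)\log n/p$; essentially all of them lie in good cells; and, by ingredient (iii) below, a good cell is w.h.p.\ connected outside a set of at most $g/2$ vertices, so all but $g/2$ of its points sit in one great component. Hence $v$ has at least $M_v$ geometric neighbours lying in great components, where $M_v \ge (1-\delta)(1+\e)\log n/p$ for a slack $\delta=\delta(\e)$ that we may take as small as we please. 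Each such point is a $G(n,p)$-neighbour of $v$ independently with probability $p$, so
\[
\Pr\bigl(v\text{ has no neighbour in a great component}\bigr)\ \le\ (1-p)^{M_v}\ \le\ e^{-pM_v}\ \le\ n^{-(1-\delta)(1+\e)},
\]
which is $o(1/n)$ once $\delta<\e/2$; a union bound over the $n$ vertices then finishes the proof.

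There are three ingredients. (i) Applying Theorem~\ref{thm:chernoff} to the single disk $B_{r-\sqrt2\lambda}(v)$, of area $\pi r^2(1-\sqrt2\eta)^2$, shows that for a fixed $v$ it contains at least $(1-\delta/3)(1+\e)\log n/p$ points (taking $\eta$ small) except with probability $\exp\bigl(-\Theta(\delta^2(1+\e)\log n/p)\bigr)$, and a union bound over $v$ makes this hold for every vertex; each such point is within distance $r$ of $v$ and in a cell inside $B_r(v)$. (ii) There are at most $\pi/\eta^2$ cells near $v$, and each bad one holds fewer than $g=\eta^2 n\lambda^2$ points, so the bad cells near $v$ hold at most $\pi n\lambda^2 = \eta^2\cdot n\pi r^2$ points altogether — a fraction $\le\delta/3$ of the nearby total for $\eta$ small. (iii) Conditioned on a cell having $x\ge g$ points, its induced graph is $G(x,p)$; the expected number of vertices outside its largest component is dominated by isolated vertices and is at most $O(xe^{-xp})$, and since $xp\ge gp = \eta^4(1+\e)\log n/\pi\to\infty$ this is $o(g)$, so by Markov's inequality a good cell fails to be great with probability $n^{-\Omega(\eta^2)}$ — a union bound over the cells is meant to upgrade this to ``w.h.p.\ every good cell is great'', so that great components contain all but a fraction $\le\delta/3$ of the nearby points. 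Combining (i)--(iii) gives $M_v\ge(1-\delta)(1+\e)\log n/p$ for every $v$ w.h.p., and the display above completes the proof.

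The step I expect to be the real obstacle is making the union bounds in (i) and (iii) genuinely valid when $p$ is not small — in the extreme, $p=\Theta(1)$ and $nr^2=\Theta(\log n)$. There a cell contains only $\Theta(\eta^2\log n)$ points, so the per-cell probability that a good cell is not great is only $n^{-\Theta(\eta^2)}$, and the per-vertex probability that $B_r(v)$ is point-deficient is only $n^{-\Theta(\delta^2)}$, and neither is automatically $o(1/n)$ for the small $\eta,\delta$ that $\e$ forces on us. The way around this is not to insist that every good cell be great or that every $B_r(v)$ be near its mean, but only that a positive fraction of the $\Theta(\eta^{-2})$ cells near each vertex behave well — which already leaves $v$ with enough geometric neighbours in great components — while trading $\eta$ and $\delta$ carefully against $\e$; in the boundary regime $nr^2=\Theta(\log n)$ this is about as delicate as the classical proof that $G(n,r)$ is connected at its threshold. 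When $p$ is small enough that $nr^2$ grows polynomially in $n$, the cells are large enough that the crude union bounds suffice and no such care is needed.
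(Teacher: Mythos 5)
Your high-level decomposition (reveal points first, then use the independent $G(n,p)$ edges from $v$ to the nearby points sitting in great components) matches the paper's strategy, but the proposal has a genuine gap that you yourself flag and then do not close. The problem is ingredient (i). The theorem's hypotheses allow $p=\Theta(1)$ with $q=(1+\e)\log n/n$, i.e.\ $n\pi r^2=(1+\e)\log n/p=\Theta(\log n)$, and in that regime the Chernoff bound for the number of points in $B_{r-\sqrt2\lambda}(v)$ fails with probability $n^{-\Theta(\delta^2/p)}$, which is nowhere near $o(1/n)$ for the small $\delta$ you need; so the union bound over vertices does not go through. Your proposed repair (``only require a positive fraction of the nearby cells to behave well'') does not address this: the obstruction is not that cells misbehave but that the \emph{total} number of points near $v$ may be an $\e$-fraction below its mean, in which case $(1-p)^{M_v}$ is no longer $o(1/n)$, and such a deficit occurs with probability only polynomially small in $n$ with a tiny exponent. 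Ingredient (iii) is also too weak as stated (Markov on isolated vertices gives only $n^{-\Theta(\eta^4)}$ per cell), though that one is repairable by bounding the probability of a component split of size $\ge g/2$ directly, as the paper does, giving $e^{-\Omega(\log n\log\log n)}$.

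What the paper does instead — and what is really the content of the proof — is to refuse to decouple the two sources of randomness. It writes the per-vertex failure probability as the single sum \eqref{eqn:1stmoment} over all point configurations $(j_1,\dots,j_k)$ in the $k$ cells inside $B_r(v)$, with the multinomial placement probability multiplied by $\prod_{i:j_i\ge g}\r(j_i)$, where $\r(j_i)\le(1+o(1))(1-p)^{j_i-g/2}$ absorbs both ``the cell is not great'' and ``$v$ misses its great component.'' The sum is then split at $j=\eta k\l^2 n$: for small $j$ the placement probability itself is $O(e^{-(1-O(\eta\log(1/\eta)))k\l^2 n})=o(1/n)$, and for large $j$ the factor $(1-p)^{j-3kg/2}$ combined with the multinomial theorem gives $e^{-(1-O(\eta^2))k\l^2 pn}=o(1/n)$. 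This trade-off between an unlikely point deficit and an unlikely edge deficit is exactly the delicate threshold computation you correctly predicted would be needed; the proposal as written asserts it can be done but does not do it, so it does not constitute a proof in the full range of the theorem.
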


\begin{proof}

Consider a fixed vertex $v$ and the ball $B_r(v)$. Now since the diameter of a cell is $\sqrt{2} \eta r < 2 \eta r$, we have that for any point $v'$ with $||v-v'|| < (1-2 \eta)r $, the cell containing $v'$ lies entirely in $B_r(v)$. Thus, the number of cells contained entirely in  $B_r(v)$ is at least
\[
k:= \left \lceil \frac{Vol(B_{(1-2 \eta)r}(v))}{\l^2} \right \rceil= \left \lceil \pi (1-2 \eta)^2 \eta^{-2} \right \rceil.
\]
Note that 
\begin{equation}\label{eqn:kbounds}
  \pi  \eta^{-2}-  4 \eta^{-1} \  \le k \le \pi  \eta^{-2}+1.
\end{equation}
So we have some cells $C_1, \ldots, C_k$ each contained in $B_r(v)$. Suppose that each cell $C_i$ has $j_i$ points in it (not counting $v$ itself which is of course in one of them). Let $\rho(j_i)$ be the probability that the graph induced on $C_i$ either fails to have a component on at least $j_i-g/2$ vertices, or else there is such a component but $v$ is not adjacent to any of vertices in it. 
Then the probability that our claim fails for the vertex $v$ is at most 
\begin{equation}\label{eqn:1stmoment}
   \sum_{\substack{0 \le j \le n-1 \\ j_1, \ldots, j_k \ge 0 \\ j_1 + \ldots + j_k =j}} \binom{n-1}{j_1, \ldots, j_k, n-1-j} \l^{2j} \brac{1-k\l^2}^{n-1-j} \prod_{i:j_i \ge g} \r(j_i).
\end{equation}
{\bf Explanation:} we set $j=j_1+\ldots j_k$ and sum over all possibilities for these values. We must choose which points go where which explains the multinomial coefficient. Then for each of the $j$ points going into one of the cells $C_1, \ldots, C_k$, it has a probability of $\l^2$ of being in the correct cell. The rest of our $n-j$ points each has a probability of $1-k\l^2$ of not being in any cell $C_1, \ldots, C_k$. Finally, in order for our claim to fail at $v$, within each cell $C_i$ having $j_i \ge g$ points, the event whose probability is defined as $\r(j_i)$ must happen (and these events are independent since the $C_i$ induce disjoint Erd\H{o}s-R\'enyi graphs). 

We first handle the terms of \eqref{eqn:1stmoment} with small $j$. Using the fact that $m! \ge (m/e)^m$ for all $m \ge 0$  (if we agree that $0^0=1$ and $0 \log 0 = 0$), we have
\[
(j_1)! \ldots (j_k)! \ge \bfrac{j_1}{e}^{j_1}  \ldots \bfrac{j_k}{e}^{j_k} = \exp\cbrac{j_1 \log \bfrac{j_1}{e} + \ldots  j_k \log \bfrac{j_k}{e}} \ge \exp\cbrac{j \log \bfrac{j}{ke} } = \bfrac{j}{ke}^j,
\]
and so
\[
\binom{n}{j_1, \ldots, j_k, n-j} \le \frac{n^j}{(j_1)! \ldots (j_k)!} \le \bfrac{ekn}{j}^j.
\]

For each $j$ there are at most $k^j$ choices for the $j_1, \ldots j_k$ summing to $j$. Thus, the sum of all terms in \eqref{eqn:1stmoment} for $j \le \eta k\l^2 n$ is at most
\begin{align}
    \sum_{\substack{0 \le j \le \eta k\l^2 n }} k^j \bfrac{ekn}{j}^j \l^{2j} \brac{1-k\l^2}^{n-j} & \le \exp\{-k\l^2 n\} \sum_{\substack{0 \le j \le \eta k\l^2 n }} \bfrac{e k^2 \l^2 n}{j(1-k\l^2)}^j.\label{eqn:smalljbound}
\end{align}
Using \eqref{eqn:kbounds} and $\l=\eta r$ gives
\begin{equation}\label{eqn:kldbounds}
   (\pi   - 4 \eta )r^2 \le k \l^2 \le (\pi  + \eta^2) r^2, 
\end{equation}
so by choosing $r_0$ not too large (and $\eta$ small) we can guarantee say $k \l^2 < 1/2$. Now the ratio of consecutive terms in \eqref{eqn:smalljbound} is
\[
\bfrac{e k^2 \l^2 n}{(j+1)(1-k\l^2)}^{j+1} \bfrac{e k^2 \l^2 n}{j(1-k\l^2)}^{-j} = \frac{1}{j+1} \bfrac{j}{j+1}^j  \frac{e k^2 \l^2 n}{1-k\l^2} \ge 2,
\]
since $nr^2=\Omega(\log n)$. So the sum is on the order of its last term. Thus \eqref{eqn:smalljbound} is at most (explanation follows)
\begin{align*}
   \exp\{-k\l^2 n\} \cdot O\brac{\bfrac{ek}{\eta(1-k\l^2)}^{\eta k\l^2 n} }&= O\brac{ \exp\cbrac{-\brac{1-\eta \log \bfrac{2e(\pi  \eta^{-2}+1)}{\eta}}k\l^2 n} } \\
   & =O\brac{ \exp\cbrac{-\brac{1-\eta \log \bfrac{2e(\pi  \eta^{-2}+1)}{\eta}}(\pi   - 4 \eta )r^2 n} }\\
   & = O\brac{ \exp\cbrac{-\frac{(1+\e/2) \log n}{p}} }=o(1/n)
\end{align*}
On the first line above we used $k\l^2 <1/2$ and \eqref{eqn:kbounds}, and on the second line we used \eqref{eqn:kldbounds}. The last line follows since for  $\eta \ll \e$ the coefficient $\brac{1-\eta \log \bfrac{2e(\pi  \eta^{-2}+1)}{\eta}}(\pi   - 4 \eta )$ from the line above is at least say $(1-\e/10)\pi $, and $\pi r^2n = nq/p = (1+\e)\log n / p$.

Now we take care of the terms of \eqref{eqn:1stmoment} with larger $j$. Suppose $j > \eta k\l^2 n$. For any $j_i\ge g$ we have 
\begin{align}
    \r(j_i) &\le (1-p)^{j_i - g/2} + \sum_{g/2 \le m \le j_i/2} \binom{j_i}{m} (1-p)^{m(j_i-m)}\nn\\
    & \le (1-p)^{j_i - g/2} + \sum_{g/2 \le m \le j_i/2} \bfrac{ej_i(1-p)^{j_i}}{m(1-p)^m}^m \nn\\
    & \le  (1-p)^{j_i - g/2} + \sum_{g/2 \le m \le j_i/2} \bfrac{2ej_i(1-p)^{j_i/2}}{g}^m\nn\\
&\leq (1-p)^{j_i - g/2} + \sum_{g/2 \le m \le j_i/2} \bfrac{4}{gp}^m = (1+o(1))(1-p)^{j_i - g/2},\label{eqn:rho}
\end{align}
 where the last line follows from $xe^{-ax}\leq 1/(ea)$ and $gp=\Omega(\log n)$.

The total sum of all values $j_i$ with $j_i < g$ is at most $kg$, and so the sum of the rest of the $j_i$ is at least $j-kg$. Thus the product in \eqref{eqn:1stmoment} is at most 
\[
\prod_{i:j_i \ge g} \r(j_i) \le \prod_{i:j_i \ge g} (1+o(1)) (1-p)^{j_i-g/2} \le (1+o(1)) (1-p)^{j-3kg/2}
\]
The sum of the terms of \eqref{eqn:1stmoment} with $j > \eta k\l^2 n$ is therefore bounded above by
\begin{align}
    &(1+o(1))\sum_{\substack{\eta k\l^2 n < j \le n \\ j_1, \ldots, j_k \ge 0 \\ j_1 + \ldots + j_k =j}} \binom{n}{j_1, \ldots, j_k, n-j} \l^{2j} \brac{1-k\l^2}^{n-j}  (1-p)^{j-3kg/2}\nonumber\\
    & \le (1+o(1)) \exp \{3kgp/2\} \sum_{\substack{0 \le j \le n \\ j_1, \ldots, j_k \ge 0 \\ j_1 + \ldots + j_k =j}} \binom{n}{j_1, \ldots, j_k, n-j} \l^{2j} \brac{1-k\l^2}^{n-j}  (1-p)^{j}\nonumber\\
    & = (1+o(1)) \exp \{3kgp/2\} \Big(k\l^2 (1-p) + (1-k\l^2)\Big)^n \label{eqn:bigjbound}
\end{align}
where the last line follows from the multinomial theorem. Simplifying and using $1+x \le e^x$, the above is at most 
\begin{align*}
    (1+o(1)) \exp \{3kgp/2- k\l^2 pn\} & \le (1+o(1)) \exp \cbrac{- \brac{1-\frac32  \eta^2  } k\l^2 pn}\\
    & \le (1+o(1)) \exp \cbrac{- \brac{1-\frac32  \eta^2  } (\pi   - 4 \eta )r^2 pn}\\
    & \le \exp\{-(1+\e/2) \log n\} = o(1/n)
\end{align*}
where on the first line we replaced $g=\eta^2 n\l^2$, on the second line we used \eqref{eqn:kldbounds}, and on the last line we used $\eta \ll \e$ and the fact that $q=\pi r^2p = (1+\e) \log n / n$. Thus \eqref{eqn:1stmoment}, our bound on the probability that the claim fails for a single vertex, is $o(1/n)$. The claim now follows from the union bound over $n$ vertices. 
\end{proof}

We now define an auxiliary graph $\G$ whose vertex set is our set of great cells in $G(n, r, p)$, and where two cells are adjacent in $\G$ if the diameter of their union is at most $r$ (i.e.~every point in one cell is distance at most $r$ from every point in the other cell). Note that if we have two cells adjacent in $\G$ the probability there is no edge between their great components is at most $(1-p)^{g^2/4}$, small enough to union bound over pairs of cells and conclude that w.h.p.~every such pair of cells has an edge between the great components. 

Recalling that we have Claim \ref{clm:greatneighbor}, to finish the proof of Theorem \ref{thm:connectivity} it suffices to prove the following:
\begin{claim}
    W.h.p.~$\G$ is connected.
\end{claim}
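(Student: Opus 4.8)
\emph{Proof plan.} The plan is to deduce this from Claim~\ref{clm:greatneighbor} together with the observation preceding this claim (that whp every $\G$-adjacent pair of cells has an edge between its two great components): once $\G$ is connected, all great components are joined into one piece, and then every vertex attaches to it, so $G(n,r,p)$ is connected. First I would replace ``great'' by ``good''. Writing $\mu:=n\lambda^2=\eta^2nr^2$ (so $\mu=(1+\e)\eta^2\log n/(\pi p)\to\infty$, using $q=\pi r^2p=(1+\e)\log n/n$) and using Chernoff bounds together with a short first moment estimate, whp every cell has between $g$ and $C_\eta\mu$ points for a suitable constant $C_\eta$; and, conditioning on the point locations so that the Erd\H{o}s--R\'enyi graphs inside distinct cells are independent, whp every good cell is great, since a cell with $x\in[g,C_\eta\mu]$ points induces a $G(x,p)$ with $xp\ge gp=\Omega(\log n)\to\infty$, so the chance that more than $g/2$ of its vertices miss its largest component is at most $\sum_{s\ge g/2}\binom{x}{s}(1-p)^{s(x-s)}\le (2eC_\eta/\eta^2)^{g/2}e^{-pg^2/4}\cdot O(x)=n^{-\omega(1)}$ (the binomial factor is $e^{O(g)}$ with an \emph{absolute} constant in the $O(\cdot)$, because $x/g\le C_\eta/\eta^2$). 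On this whp event $\G$ is exactly the graph on the \emph{good} cells with adjacency ``$\diam$ of the union $\le r$''.

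Two ingredients are needed. Geometrically, cells $C,C'$ are $\G$-adjacent whenever $\|\mathrm{ctr}(C)-\mathrm{ctr}(C')\|\le\rho:=(1-\sqrt2\,\eta)r$ and only if that distance is $\le r$. Probabilistically, $\Pr(\text{a cell is non-good})=\Pr\!\big(\mathrm{Bin}(n,\lambda^2)<\eta^2\mu\big)\le e^{-(1-o_\eta(1))\mu}=n^{-(1-o_\eta(1))\,\eta^2(1+\e)/(\pi p)}\le n^{-(1-o_\eta(1))\,\eta^2(1+\e)/\pi}$ (using $p\le1$); and since multinomial cell-counts are negatively associated, for any set $T$ of cells $\Pr(\text{every cell of }T\text{ is non-good})\le n^{-(1-o_\eta(1))\,|T|\,\eta^2(1+\e)/\pi}$.

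Then I would run a toroidal contour argument. Suppose $\G$ is disconnected; let $A$ be a component and $B$ the remaining good cells, so the centre sets satisfy $\mathrm{dist}(S_A,S_B)>\rho$. The function $f(z)=\mathrm{dist}(z,S_A)-\mathrm{dist}(z,S_B)$ is $<-\rho$ on $S_A$ and $>\rho$ on $S_B$, so its zero set contains a simple closed curve $\gamma$ separating $S_A$ from $S_B$ on the torus, and every point of $\gamma$ lies at distance $>\rho/2$ from all good-cell centres --- hence every cell whose centre lies in the width-$\rho$ tube $N_{\rho/2}(\gamma)$ is non-good; call these cells the \emph{moat}. If $\gamma$ is contractible it bounds a disc containing a ball of radius $\rho/2$ about a centre in $S_A$, so by the isoperimetric inequality $\gamma$ has length $\ge\pi\rho$, and the moat then contains at least $\pi\rho^2/\lambda^2-O(1/\eta)=(1-o_\eta(1))\pi/\eta^2$ cells; if $\gamma$ is non-contractible it has length $\ge1$ and the moat contains $\ge(1-o_\eta(1))/(\eta^2 r)$ cells. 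The number of closed cell-curves of length $\ell$ is at most $N\cdot 4^{\ell}\le n\cdot4^\ell$ (here $N\le n$ is the number of cells). Combining with the tail bound, in the contractible case the probability that \emph{some} moat is entirely non-good is (the series over $\ell$ being geometric with ratio $\to0$, it is at most twice its smallest term) bounded by
\[
n\cdot 4^{O(1/\eta)}\cdot n^{-(1-o_\eta(1))(1+\e)/p}\ \le\ e^{O_\eta(1)}\,n^{\,1-(1-o_\eta(1))(1+\e)}\ =\ o(1)
\]
once $\eta$ is small enough in terms of $\e$ --- here $\tfrac{\pi}{\eta^2}\cdot\tfrac{\eta^2(1+\e)}{\pi p}=\tfrac{1+\e}{p}\ge1+\e>1$ uses $p\le1$. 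In the non-contractible case one gets $n\cdot4^{O(1/(\eta r))}\cdot n^{-(1-o_\eta(1))(1+\e)/(\pi p r)}$, which is $o(1)$ provided $r_0$ is chosen small enough in terms of $\e$ (so that $\tfrac{(1-o_\eta(1))(1+\e)}{\pi r_0}>1$, again using $p\le1$ and $r\le r_0$). Hence whp $\G$ is connected.

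\emph{Main obstacle.} The crux is the contour/isoperimetric estimate: the moat surrounding a putative component must be shown to contain at least $(1-o_\eta(1))\pi/\eta^2$ non-good cells in the contractible case --- \emph{precisely} enough, given $p\le1$, to beat a union bound over the $\le n$ possible positions, with only about $n^{\e}$ of slack --- which forces the constants to be chosen carefully ($\eta$ small in terms of $\e$, and $r_0$ small in terms of $\e$ to kill the non-contractible curves), and requires care with the lower-order perimeter corrections when converting tube areas to cell counts; the rest of the argument is routine large-deviation bookkeeping.
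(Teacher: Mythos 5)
Your overall strategy is genuinely different from the paper's: you run a Peierls-type contour argument on the torus (a separating curve $\gamma$ in the zero set of $d(\cdot,S_A)-d(\cdot,S_B)$, whose $\rho/2$-tube consists entirely of non-good cells, followed by isoperimetry and a union bound over contours), whereas the paper tiles the torus into blocks of $100\eta^{-4}$ cells, shows no block has $k-10\eta^{-1}$ bad cells, finds ``great'' rows and columns forming a connected backbone, and rules out a stray component by counting the non-great cells forced above its highest cell and below its lowest cell. Both arguments ultimately rest on the same quantitative fact — a barrier isolating a component must contain $(1-o_\eta(1))\pi\eta^{-2}$ non-good cells, each costing $n^{-(1-o_\eta(1))\eta^2(1+\e)/(\pi p)}$, which beats the $\approx n$ positions with only $n^{\e}$ to spare — and your reduction to ``good $\Rightarrow$ great'' and your per-cell tail bound match \eqref{eqn:badcellprob}. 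The paper's route is more elementary (no topology, no isoperimetric inequality, no contour enumeration); yours is more standard in flavor and would generalize more readily, but it leaves more unverified.

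Two concrete problems. First, your opening assertion that ``whp every cell has between $g$ and $C_\eta\mu$ points'' is false in its lower half: the probability a cell is bad is about $n^{-(1-o_\eta(1))\eta^2(1+\e)/(\pi p)}$, while there are $\Theta(np/(\eta^2\log n))$ cells, so for small $\eta$ the expected number of bad cells tends to infinity. (If the assertion were true, $\G$ would be trivially connected and the rest of your proof would be unnecessary; the remainder of your argument correctly treats non-good cells as present, so this is a slip, but it should be deleted.) Second, and more seriously, the step you yourself flag as the crux does not go through as stated: summing $n\cdot 4^{\ell}\cdot(\text{per-cell cost})^{m(\ell)}$ as a geometric series in $\ell$ requires the number $m(\ell)$ of forced non-good cells to grow linearly in the contour length $\ell$, but a curve of length $L$ that folds back on itself inside a ball of radius $O(\rho)$ has a tube $N_{\rho/2}(\gamma)$ of area only $O(\rho^2)$, so $m(\ell)$ stays $O(\eta^{-2})$ while the entropy $4^{\ell}$ grows without bound. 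You need either to pass to a shortest (hence taut) separating curve before enumerating, or to abandon curves and instead enumerate connected sets of non-good cells directly: a component of $\G$ not meeting the backbone is surrounded by a connected set $M$ of non-good cells with $|M|\ge(1-o_\eta(1))\pi\eta^{-2}$, and there are at most $n\cdot C^{|M|}$ such sets containing a given size, with $C^{|M|}$ swamped by $n^{-\Omega(\eta^2|M|/p)}$. Either fix works, but as written the enumeration is not valid; the paper's highest-/lowest-cell count avoids the issue entirely.
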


\begin{proof}
    The probability that any given cell is bad is at most 
\begin{align}
\sum_{j=0}^g  \binom{n}{j} \l^{2j} \brac{1-\l^2}^{n-j} & \le 2g \binom{n}{g} \l^{2g} \brac{1-\l^2}^{n-g} \nn\\
&\le 2g \exp \cbrac{g \log \bfrac{ne}{g} + g \log\brac{\l^{2}}- (n-g) \l^2}\nn\\
& \le 2g \exp \cbrac{\eta^2 n  \l^2 \brac{\log \bfrac{e}{\eta^2}+\l^2} - n \l^2}\nn\\
& \le \exp\cbrac{-(1-\e/10)n\l^2}.\label{eqn:badcellprob}
\end{align}
Note that if we condition on one cell being bad, the event that another cell is bad only becomes less likely. 

    We consider blocks composed of $100 \eta^{-4}$ cells as defined in Section \ref{cliques}. The probability that a block contains $k-10\eta^{-1}$ bad cells is at most 
    \begin{align*}
        \binom{100 \eta^{-4}}{k-10\eta^{-1}} \exp\cbrac{-(k-10\eta^{-1})(1-\e/10)n\l^2} &= O\brac{\exp\cbrac{-(\p \eta^{-2}-14\eta^{-1})(1-\e/10)n(\eta r)^2}}\\
        &= O\brac{\exp\cbrac{-(1-\e/10)(\p-14\eta)nr^2}}\\
        & = O\brac{\exp\cbrac{-(1+\e/2)\log n / p}}=o(1/n)
    \end{align*}
    where we have used \eqref{eqn:kbounds} and $\eta \ll \e$. Since the number of blocks is $O( (\eta^2 r)^{-2} ) = o(n)$, by the union bound we have that no block contains $k-10\eta^{-1}$ bad cells. 

Following part of the same calculation in \eqref{eqn:rho}, the probability that a good cell having $j \ge g$ points fails to be great is at most 
\begin{align*}
    \sum_{g/2 \le m \le j/2} \binom{j}{m} (1-p)^{m(j-m)} &\le \sum_{g/2 \le m \le j/2} \bfrac{2ej(1-p)^{j/2}}{g}^m\\
    & \le \frac j2 \bfrac{4}{gp}^m\leq e^{-\Omega(\log n\log\log n)},
\end{align*}
and so by the union bound w.h.p.~every good cell is great.

    Now within a block there are $10\eta^{-2} > 2k$ rows and columns (i.e. vertical or horizontal strips of cells). Thus, within a block we have that more than half of the rows (and more than half of the columns) contain only great cells. Call such a row or column great. Thus, for any two blocks that are disjoint except for sharing a side, there is some great row or column in one of the blocks touching a great row or column in the other block. Thus, the cells that are in great rows or columns are all in the same component of $\G$. Any other component of $\G$ must be contained entirely in one block and consist of cells bounded by great rows and columns. Suppose there is such a component $\G_0$ of $\G$. Consider a ``highest'' cell $C$ in $\G_0$, i.e.~a cell whose $y$-coordinate of its center $(x_C, y_C)$ is largest. There are at least $k$ cells which contain only points of distance at most $r$ from $(x_C, y_C)$. At most $2\eta^{-1}$ of these cells are at the same height as $C$, and so at least $(k-2\eta^{-1})/2$ of them are above $C$. These $(k-2\eta^{-1})/2$ cells must not be great. We can find an additional  $(k-2\eta^{-1})/2$ cells which are not great by considering the ``lowest'' cell in $\G_0$. Thus, $\G_0$ is contained in a block which has $k-2\eta^{-1}$ cells which are not great, which is a contradiction. This completes the proof of our claim, and the proof of Theorem \ref{thm:connectivity}.

\end{proof}

\end{proof}
\section{Hamilton Cycles: proof of Theorem \ref{thm:hamiltonicity}}
\begin{proof}
We begin by decomposing $G_{n,p}$ into $G_{n,p_1}\cup G_{n,p_2}$ where $p_1=p_2$ and $1-p=(1-p_1)(1-p_2)$. Then we partition $D$ into cells $C_1,C_2,\ldots,C_m,m=5/r^2$ of side $r/\sqrt{5}$. We assume that the cell order is such that it defines a sequence where $C_i,C_{i+1}$ share an edge for $1\leq i<m$. Now the expected number of points in a cell is $nr^2/5\geq K\log n/(5p)$. So we can assume that $K$ is large enough so that w.h.p. every cell has at least $10\log n/p$ points.

Now if the are $n_i$ points in a cell $i$ then these points induce a random graph $\G_i=G_{n_i,p_1}$ where we have assumed that $n_i\geq 10\log n/p$ for $i\in[m]$. It is not difficult to show that $\Pr(G_{N,\xi}\text{ is not Hamiltonian})\leq 2N^2qe^{-Nq}$ when $\xi=O(\log N/N)$ (see for example Chapter 6 of Frieze and Karo\'nski \cite{FK} where we see that this probability is dominated by the probability of the existence of vertices of degree less than two). It follows from this that w.h.p., $\G_i$ is Hamilton for $i\in[m]$. Indeed,
\[
\Pr(\exists i:\G_i\text{ is not Hamiltonian})\leq 2\sum_{i=1}^mn_i^2p_1e^{-n_ip_1}\leq 2n^3p_1n^{-10p_1/p}=o(1),
\]
since $p_1,p_2\geq p/2$.

Assume then that $\G_i$ contains a Hamilton cycle for $i\in[m]$. Divide $C_i$ into two paths $P_{i,-},P_{i,+}$ of length $\rdown{n/2}$ or $\rdup{n/2}$. We will show next that w.h.p. for all $i\in[m-1]$ there is are edges $\set{a_i,b_i}\in P_{i,+},\set{c_i,d_i}\in P_{i+1,-}$ such that $G_{n,p_2}$ contains both edges $\set{a_i,c_i},\set{b_i,d_i}$. If for $i\in[m-1]$, we delete $\set{a_i,b_i},\set{c_i,d_i}$ from $\bigcup_{i=1}^mC_i$ and replace them by $\set{a_i,c_i},\set{b_i,d_i}$ then we obtain a Hamilton cycle. The probability that we fail to make this transformation can be bounded by
\[
\sum_{i=1}^{m-1}(1-p_2^2)^{n_in_j/16}\leq me^{-100p_2^2\log^2n/16p^2}=o(1).
\]
Here we have used every other edge in the paths $P_{i,+},P_{i,-}$ to avoid dependencies.
\end{proof}
\section{Independence number: proof of Theorem \ref{thm:ind} }

\begin{proof}
We start with the lower bound. We use cells of side length $r$. By removing every other row and every other column of cells, we obtain a set of $1/(4r^2)$ cells so that no two cells in our set intersect share a boundary (not even a corner). The distance between any two distinct cells is at least $r$, so no point in one would ever be adjacent to any point in another. By Chernoff, w.h.p.~ every cell in $C$ has at least $n'=nr^2/2$ points. W.h.p.~(see, for example, \cite{FK}) $G(n', p)$ has an independent set of size at least $\log_{1/(1-p)}(n')$. Thus, for constant $p$ w.h.p.~we have
\[
\a(G) \ge |C| \log_{1/(1-p)}(n') = \frac{1}{4r^2}\log_{1/(1-p)}(nr^2/2)= \Omega\brac{r^{-2}\log_{1/(1-p)}(nr^2)}.
\]
We move on to the upper bound. Use cells of side length $r/\sqrt{2}$, so there are $2r^{-2}$ cells. The Chernoff bounds imply that w.h.p.~ no cell has more than $n'':=2n\left(r/\sqrt{2} \right)^2=nr^{2}$ points. The expected number of independent sets of size $k:=3 \log _{1/(1-p)} n''$ in  $G(n'', p)$ is at most
\begin{align*}
    \binom {n''}k (1-p)^{\binom k2} \le \brac{n'' (1-p)^\frac{k-1}{2}}^k \le \brac{O(1) nr^2 (1-p)^{\frac 32 \log_{1/(1-p)}(nr^2)}}^k &= \brac{O( (nr^2)^{-1/2})}^k,
\end{align*} 
small enough for the union bound to conclude that w.h.p.~no cell has an independent set larger than $k$. 
Summing over the cells we have 
\[
\a(G) \le n'' k = 2r^{-2} \cdot 3 \log_{1/(1-p)}\brac{nr^{2}} = O\brac{r^{-2}\log_{1/(1-p)}(nr^2)}.
\]
This completes the proof of the upper bound. 
\end{proof}
\section{Chromatic number: proof of Theorem \ref{thm:color}}

\begin{proof}
First we prove the upper bound. Use cells of side length $r$. We will use $4$ disjoint palettes of colors. Each palette will be used on a set of $(2r)^{-2}$ cells in every other row and every other column, so that two cells using the same palette are always distance at least $r$ apart. The Chernoff bound implies that  w.h.p.~every cell has at most $n':=2nr^2$ points. We will use the following result:
\begin{theorem}[Theorems 7.7, 7.8 in \cite{FK}]
    For constant $0 < p < 1$ we have
    \[
    \E(\chi(G(n, p))) = (1+o(1)) \frac{n}{2 \log_{1/(1-p)} n}
    \]
    and 
    \[
    \Pr( |\chi(G(n, p) - \E(\chi(G(n, p)))| \ge t ) \le \exp(-t^2/2n).
    \]
\end{theorem}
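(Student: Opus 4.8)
The plan is to establish the two displayed statements separately: the concentration inequality follows from a martingale argument that never uses the value of $\E(\chi(G(n,p)))$, while the asymptotic for $\E(\chi(G(n,p)))$ comes from matching a first-moment lower bound with a greedy upper bound in the style of Bollob\'as. For the concentration bound I would expose the vertices of $G=G(n,p)$ one at a time and set $Z_i=\E\big(\chi(G)\mid G[\{1,\dots,i\}]\big)$, so $(Z_i)_{i=0}^{n}$ is a martingale with $Z_0=\E(\chi(G))$ and $Z_n=\chi(G)$. Two graphs on $[n]$ that differ only in the edges incident to a single vertex have chromatic numbers differing by at most $1$ (give that vertex a brand-new colour), so the increments satisfy $|Z_i-Z_{i-1}|\le 1$; Azuma's inequality then gives $\Pr(|\chi(G)-\E(\chi(G))|\ge t)\le 2\exp(-t^2/2n)$, and dropping the harmless constant $2$ and the two-sidedness yields exactly the stated bound. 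This step is insensitive to what $\E(\chi(G))$ actually is.

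For the lower bound on the expectation, write $b=1/(1-p)$. Since $\chi(G)\ge n/\alpha(G)$ and $x\mapsto 1/x$ is convex, Jensen's inequality gives $\E(\chi(G))\ge n/\E(\alpha(G))$, so it suffices to show $\E(\alpha(G))\le(2+o(1))\log_b n$. This is a first-moment estimate: the expected number of independent $k$-sets is $\binom nk(1-p)^{\binom k2}$, which is $o(1)$ as soon as $k\ge(2+\e)\log_b n$, whence $\E(\alpha(G))=\sum_k\Pr(\alpha(G)\ge k)\le(2+\e)\log_b n+o(1)$, and letting $\e\to0$ slowly gives $\E(\chi(G))\ge(1-o(1))\,n/(2\log_b n)$.

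For the upper bound on the expectation — the substantive part — I would put $m=\lceil n/\log^2 n\rceil$ and let $k=k(n)$ be $2\log_b n$ minus a correction term of order $\log_b\log_b n$, chosen so that for a fixed $S\subseteq[n]$ with $|S|=m$ the expected number $\mu_S$ of independent $k$-sets in $G[S]$ is quasipolynomially large. The key lemma is that \whp\ \emph{every} such $S$ contains an independent set of size $k$. To prove it, apply Janson's inequality to $Y_S=$ (number of independent $k$-sets of $G[S]$): with $\Delta_S=\sum\Pr(A,B\text{ both independent})$ over ordered pairs of distinct $k$-subsets of $S$ with $|A\cap B|\ge2$, Janson gives $\Pr(Y_S=0)\le\exp(-\mu_S^2/2\Delta_S)$ when $\Delta_S\ge\mu_S$ (and $\le e^{-\mu_S/2}$ otherwise), and estimating $\Delta_S$ by splitting according to $|A\cap B|=i$ shows $\Pr(Y_S=0)$ is small enough to beat the union bound over the $\binom nm$ choices of $S$. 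Given the lemma, colour $G$ greedily: repeatedly remove an independent set of size $k$; as long as $\ge m$ vertices remain the lemma supplies one, so after at most $n/k$ removals fewer than $m=n/\log^2 n$ vertices remain, and each of those gets its own colour, for a total of $n/k+n/\log^2 n=(1+o(1))\,n/(2\log_b n)$ colours \whp. Since $\chi(G)\le n$ deterministically and the lemma fails with probability $o(1/\log n)$, this gives $\E(\chi(G))\le(1+o(1))\,n/(2\log_b n)$, which together with the lower bound proves the first display.

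The main obstacle is the key lemma: everything else (the martingale, Jensen, the first moment, the greedy rounding) is routine, but one has to choose the second-order term in $k$ correctly and bound the clustering sum $\Delta_S$ — the pairs with $|A\cap B|$ close to $k$ are the dangerous ones — tightly enough that Janson's bound survives the $\binom nm$-fold union bound. It is precisely the ``for every $m$-subset'' formulation that makes the greedy extraction legitimate, since it removes any dependence between successive deletions.
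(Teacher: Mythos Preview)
The paper does not prove this statement at all: it is quoted verbatim as Theorems~7.7 and~7.8 of \cite{FK} and used as a black box inside the proof of Theorem~\ref{thm:color}. So there is no ``paper's own proof'' to compare against; your sketch is simply a (correct) outline of the standard argument --- the Shamir--Spencer vertex-exposure martingale for the concentration inequality, and Bollob\'as's greedy extraction via Janson's inequality for the asymptotic --- which is precisely what the cited reference contains.
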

Thus we have 
\[
\Pr\brac{ G(n', p) \ge \frac{n'}{ \log_{1/(1-p)} n'}} \le \exp\brac{-\frac{(1+o(1)) \brac{\frac{n'}{2 \log_{1/(1-p)} n'}}^2 }{2n'}} = \exp\brac{-\Omega\brac{\frac{n'}{\log^2 n'}}},
\]
which is small enough to beat the union bound over all cells. Thus, w.h.p. within each cell we have a graph which requires at most $n'/ \log_{1/(1-p)} n'$ colors. Thus each palette can have that many colors, and 
we have
\[
\chi(G) \le \frac{4 n'}{\log_{1/(1-p)}(n')} =  O\brac{ \frac{nr^2}{\log_{1/(1-p)}(nr^2)}}.
\]

Now we prove the lower bound. Using Theorem \ref{thm:ind}, we have

\[
\chi(G) \ge \frac{n}{\a(G)}  =\Omega\brac{ \frac{nr^2}{\log_{1/(1-p)}(nr^2)}}.
\]

\end{proof}

\section{Diameter: proof of Theorem \ref{thm: diameter}}
\begin{proof}
We first prove the lower bound. We use cells of side length $r$, and the Chernoff bounds imply that w.h.p.~every cell has a point in it. Taking two points in farthest-apart cells, they have Euclidean distance $\Omega(1)$ and so their graph distance is $\Omega(r^{-1})$. Thus 
\begin{equation}\label{eqn:diamLB1}
    \diam(G) = \Omega(r^{-1}).
\end{equation}
Now note that by the Chernoff bounds, w.h.p.~every vertex has degree at most $2nq = 2\pi nr^2p$, i.e.~about twice the expected degree. Assuming this, the number of vertices within a distance $i$ of a point $v$ is at most 
\[
1+2\pi nr^2p + \ldots + (2\pi nr^2p)^i = (1+o(1))(2\pi nr^2p)^i.
\]
Therefore if $(2\pi nr^2p)^i < n/2$ then $\diam(G) >i$. In particular, 
\begin{equation}\label{eqn:diamLB2}
    \diam(G) > \frac{\log(n/2)}{ \log(2\pi nr^2p)} = \Omega \brac{\frac{\log n}{\log(nr^2p)}}.
\end{equation}
Combining \eqref{eqn:diamLB1} and \eqref{eqn:diamLB2} completes our lower bound:
\[
\diam(G) = \Omega\brac{ r^{-1} + \frac{\log n}{\log(nr^2p)}}.
\]

Now we prove the upper bound. Here we will use cells of side length $r/2\sqrt{2}$. If two cells intersect at all (even at a single point) then every point in one cell is within distance $r$ of every point in the other cell. The Chernoff bounds imply that w.h.p.~every cell has at least $n':=n r^2/16$ points (half the expected number). 

Consider any two points $u, v$. Let
\[
\ell:= \max\cbrac{ \frac{\sqrt 2}{r} \;\;, \frac{\log(n'/10)}{\log (n'p/2)}}
\]
Since $\ell \ge \sqrt{2}/r$, there is a sequence of (not necessarily distinct) cells $C_0, \ldots, C_\ell$ such that $u \in C_0$, $v \in C_\ell$, and any pair of consecutive cells in our sequence intersects. Let $S_0=\{u\}$ and for $i=1, \ldots \ell$, let $S_i$ be some set of points in $C_i$ that have neighbors in $S_{i-1}$. We will show that w.h.p. we can take
\begin{equation}\label{eqn:Sibound}
    |S_i| = \min\cbrac{ \bfrac{n'p}{2}^i, \frac{n'}{10}}, \qquad i=0, \ldots, \ell.
\end{equation}
We will prove this by induction on $i$. The base case $i=0$ holds since $|S_0|=1$.  We have already revealed the location of all the points in the plane, so our inductive proof can proceed by revealing adjacencies between points in $S_{i}$ and points in $C_{i+1}$ (so each edge is present with probability $p$). For the induction step, assume it holds for some $i < \ell$. The number $X$ of vertices in $C_{i+1}$ and not in $S_i$ that are adjacent to some vertex in $S_i$ is distributed as $\bin\brac{n'', 1-(1-p)^{|S_i|}}$, where $n'' \ge 9n'/10$ is the number of points in $C_{i+1}$ and not $S_i$.

{\bf Case 1:} Suppose  we have  $\bfrac{n'p}{2}^{i+1}\le \frac{n'}{10}$. Then $|S_i| = \bfrac{n'p}{2}^{i} \le \frac{1}{5p},$ so 
\begin{align}
    \E[X] = n'' \sqbrac{ 1-(1-p)^{|S_i|}} &\ge \frac{9n'}{10}  \sqbrac{ 1-\exp\cbrac{-|S_i|p}}\nn\\
    &\ge \frac{9n'}{10}  \sqbrac{ |S_i|p - \frac 12 (|S_i|p)^2} \nn\\
    &\ge \frac 9{10} \sqbrac{ 1 - \frac 1{50}}|S_i|n'p > \frac 34 \bfrac{n'p}{2}^{i} n'p.\label{eqn:Xbound}
\end{align}
Since the above expectation is $\Omega(n'p) = \Omega(\log^2 n)$, the Chernoff bounds imply that with probability at least $1-\exp(-\Omega(\log^2 n ))$, the random variable $X$ is at least $\bfrac{n'p}{2}^{i+1}$ (2/3 of \eqref{eqn:Xbound}). The failure probability is small enough to beat the union bound over $\ell$ steps. This completes the proof for Case 1. 

{\bf Case 2:} Suppose  we have  $\bfrac{n'p}{2}^{i+1}> \frac{n'}{10}$. Then $|S_i| \ge \bfrac{n'p}{2}^{i} > \frac{1}{5p} ,$ so 
\begin{align}
    \E[X] = n'' \sqbrac{ 1-(1-p)^{|S_i|}} &\ge \frac{9n'}{10}  \sqbrac{ 1-\exp\cbrac{-|S_i|p}} > \frac{9n'}{10}  \sqbrac{ 1-\exp\cbrac{-1/5}} > 0.15 n'.\label{eqn:Xbound2}
\end{align}
Since the above is $\Omega(\log^2 n)$, again Chernoff gives us that with probability at least $1-\exp(-\Omega(\log^2 n ))$, the random variable $X$ is at least $\frac{n'}{10}$ (2/3 of \eqref{eqn:Xbound2}). This completes the proof for Case 2. Thus w.h.p.~\eqref{eqn:Sibound} holds.

Because $\ell \ge \frac{\log(n'/10)}{\log (n'p/2)}$, we have 
\[
\bfrac{n'p}{2}^\ell \ge \frac{n'}{10}
\]
and so $|S_\ell|= n'/10$. The probability there is no edge from $v$ to $S_{\ell}$ is then $(1-p)^{n'/10}= \exp\{-\Omega(n'p)\}= \exp\{-\Omega(\log^2 n)\}$, so w.h.p.~there is such an edge. Thus the distance from $u$ to $v$ is at most 
\[
\ell+1 = 1+ \max\cbrac{ \frac{\sqrt 2}{r} \;\;, \frac{\log(n'/10)}{\log (n'p/2)}} = O\brac{ r^{-1} + \frac{\log n}{\log(nr^2p)}}
\]
\end{proof}


\begin{thebibliography}{99}

\bibitem{FK} A.M. Frieze and M. Karo\'nski, Introduction to Random Graphs, Cambridge University Press, 2015.
%
\bibitem{KTW23} M. Kahle, M. Tian and Y. Wang, {On the clique number of noisy random geometric graphs, Random Structures and Algorithms} 63 (2023) 242-279.
%
 \bibitem{Pen} M. Penrose, Random Geometric Graphs, {\em Oxford University Press}, 2003.
\end{thebibliography}
\end{document}